\newtheorem{theorem}{Theorem}[section]
\newtheorem{lemma}[theorem]{Lemma}
\newtheorem*{conjecture*}{Conjecture}
\newtheorem*{convention*}{Convention}
\theoremstyle{definition}
\newtheorem{corollary}[theorem]{Corollary}
\theoremstyle{remark}
\newtheorem{remark}[theorem]{Remark}
\newtheorem{remarkthm}{Remark}[theorem]
\numberwithin{equation}{section}
\newcommand{\Z}{\mathbb{Z}}
\newcommand{\opn}{\operatorname}
\newcommand{\BP}{\opn{BP}}
\newcommand{\CH}{\opn{CH}}
\begin{document}
\title[On $\BP^*(BPU_n)$ in lower dimensions and the Thom map]{On the Brown-Peterson cohomology of $BPU_n$ in lower dimensions and the Thom map}



\author{Xing Gu}
\address{Max Planck Institute for Mathematics, Vivatsgasse 7, 53111 Bonn, Germany}
\email{gux2006@mpim-bonn.mpg.de}
\thanks{The author would like to thank the Max Planck Institute for Mathematics for their hospitality and financial support.}

\subjclass[2010]{55N35, 55R35}

\date{}

\dedicatory{}

\keywords{the Brown-Peterson cohomology, the classifying spaces of the projective unitary groups}

\begin{abstract}
For an odd prime $p$, we study the image of the Thom map from Brown-Peterson cohomology of $BPU_n$ to the ordinary cohomology in dimensions $0\leq i\leq 2p+2$, where $BPU_n$ is the classifying space of the projective unitary group $PU_n$. Also we show that a family of well understood $p$-torsion cohomology classes $y_{p,k}\in H^{2p^{k+1}+2}(BPU_n;\Z_{(p)})$ are in the image of the Thom map.
\end{abstract}

\maketitle
\section{Introduction}\label{sec:intro}
Let $p$ be an odd prime number, and let $\BP$ be the corresponding Brown-Peterson spectrum. The Brown-Peterson cohomology $\BP^*(BG)$ of the classifying space of a compact Lie group or a finite group $G$  is the subject of various works such as Kameko and Yagita \cite{kameko2008brown}, Kono and Yagita \cite{kono1993brown}, Leary and Yagita \cite{leary1992some}, and Yan \cite{YAN1995221}.

One case that $\BP^*(BG)$ is particularly interesting is when $G$ is homotopy equivalent to a complex algebraic group via a group homomorphism. In this case the Chow ring of $BG$, $\CH^*(BG)$ is defined by Totaro \cite{totaro1999chow}, and one has the cycle class map
\begin{equation}\label{eq:cl}
\opn{cl}:\CH^*(BG)\rightarrow H^{\textrm{even}}(BG;\Z)
\end{equation}
which is a ring homomorphism from the Chow ring to the subring of $H^*(BG)$ of even dimensional classes. Although for complex algebraic varieties, Chow rings are in general much more complicated than ordinary cohomology, it is shown in many cases that $\CH^*(BG)$ is simpler than $H^*(BG;\Z)$. On the other hand, Totaro \cite{totaro1999chow} shows that the cycle class map \eqref{eq:cl} factors as
\begin{equation}\label{eq:cl refined}
\CH^*(BG)\xrightarrow{\tilde{\opn{cl}}}\opn{MU}^{\textrm{even}}(BG)\otimes_{\opn{MU}^*}\Z\xrightarrow{T}
H^{\textrm{even}}(BG;\Z)
\end{equation}
where $\opn{MU}$ denotes the complex cobordism theory, and the second map $T$ is the Thom map. The first map $\tilde{\opn{cl}}$ is called the refined cycle class map. Therefore, the $\BP$ theory, being a $p$-local approximation of the $\opn{MU}$ theory, act as a bridge between the Chow ring and the ordinary cohomology of $BG$. Indeed, it is an interesting problem to find out for which $G$ is the refined cycle class map
\[\tilde{\opn{cl}}: \CH^*(BG)\xrightarrow{\tilde{\opn{cl}}}\opn{MU}^*(BG)\otimes_{\opn{MU}^*}\Z\]
an isomorphism. For this to hold, it is necessary that $\BP^*(BG)$ concentrates in even dimensions. This property is studied for various $G$ by Landweber \cite{landweber1970coherence} and \cite{landweber1972elements}, and by Kono and Yagita \cite{kono1993brown}.

In this paper we focus on the case $G=PU_n$, where $PU_n$ is the $n$th projective unitary group, i.e., the quotient of the unitary group $U_n$ by its center $S^1$, the group of the unit circle, or equivalently, the quotient of the special unitary group $SU_n$ by its center, the group of $n$th complex roots of unit.

The algebraic invariants of $BPU_n$ are much less known compared to $BG$ for most of the other compact Lie groups $G$. The Chow ring of $BPU_3$ is determined, up to one relation, by Vezzosi \cite{vezzosi1999chow}. The additive structure of $\CH^*(BPU_3)$ is independently determined by Kameko and Yagita \cite{kameko2008brown}. Vistoli \cite{vistoli2007cohomology} improves Vezzosi's method and determines the additive structures as well as much of the ring structures of the Chow ring and ordinary cohomology with integral coefficients of $BPU_p$ for an odd prime $p$. In particular, he completes Vezzosi's study of the Chow ring of $BPU_3$. The ordinary mod $p$ cohomology and Brown-Peterson cohomology of $BPU_p$ are studied by Kameko and Yagita \cite{kameko2008brown}, Kono and Yagita \cite{kono1993brown}, and Vavpeti{\v{c}} and Viruel \cite{vavpetivc2005mod}. The mod $2$ ordinary cohomology ring of $BPU_n$ for $n\equiv 2\pmod{4}$ is determined by Kono and Mimura \cite{kono1975cohomology} and Toda \cite{toda1987cohomology}.

For a general positive integer $n$, the cohomology groups $H^k(BPU_n;\Z)$ for $k\leq 3$ are easily determined by the universal $n$-cover $SU_n\to PU_n$. The group $H^4(BPU_n;\Z)$ is determined by Woodward \cite{woodward1982classification} and $H^5(BPU_n;\Z)$ by Antieau and Williams \cite{antieau2014topological}. The ring structure of $H^*(BPU_n;\Z)$ in dimensions less than or equal to $10$ is determined by the author \cite{gu2019cohomology}. The author \cite{gu2019some} also studies some $p$-torsion classes of $\CH^*(BPU_n)$ for $n$ with $p$-adic valuation $1$, i.e., $p|n$ but $p^2\nmid n$. To the author's best knowledge, $\BP^*(BPU_n)$ for $n$ not a prime number has not been studied in any earlier published work.

Before stating the main conclusions of this paper, we fix some notations. For a spectrum $A$, we denote by $A_*$ its homotopy groups, or the group of coefficients of the homology theory $A$, considered as a graded abelian group. Denote by $A^*$ the group of coefficients of the cohomology theory associated to $A$. Then $A^*$ and $A_*$ are isomorphic, but the gradings are opposite to each other. For instance, we have $\BP_*\cong\Z_{(p)}[v_1,v_2\cdots]$ where $\opn{dim}v_k=2p^k-2$ and $\BP^*\cong\Z_{(p)}[v_1,v_2\cdots]$ where $\opn{dim}v_k=-(2p^k-2)$.

Let $H\Z_{(p)}$ be the Eilenberg-Mac Lane spectrum for the ring $\Z_{(p)}$, and $T:\BP\to H\Z_{(p)}$ be the Thom map. Then we have the augmentation map $T:\BP^*\to\Z_{(p)}$ induced by the Thom map, and the ring $\Z_{(p)}$ has a structure of graded $\BP^*$-algebra given by $T^*$.

More generally, for any space $X$, we have the induced homomorphism $T:\BP^*(X)\to H^*(X;\Z_{(p)})$, whose image is canonically isomorphic to the graded $\Z_{(p)}$-algebra $\BP^*(X)\otimes_{BP^*}\Z_{(p)}$.
\begin{theorem}\label{thm:main}
Let $p$ be an odd prime. In dimensions $0\leq k \leq 2(p+1)$, the graded ring $\BP^*(BPU_n)\otimes_{BP^*}\Z_{(p)}$, i.e., the image of the Thom map
\[T:\BP^*(BPU_n)\to H^*(BPU_n;\Z_{(p)})\]
concentrates in even dimensions.
\end{theorem}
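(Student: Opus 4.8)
The plan is to establish the statement by computing the ordinary integral (p-local) cohomology of $BPU_n$ in the range $0 \leq k \leq 2(p+1)$, identifying which classes can possibly lie in the image of the Thom map, and then showing that the odd-dimensional classes in this range are not hit. The key structural input is the Atiyah–Hirzebruch spectral sequence (AHSS) for $\BP^*(BPU_n)$:
\[
E_2^{s,t} = H^s(BPU_n; \BP^t) \Longrightarrow \BP^{s+t}(BPU_n),
\]
whose $E_\infty$-page, after passing to the associated graded and tensoring down along $T:\BP^* \to \Z_{(p)}$, computes the image of the Thom map. Concretely, the image of $T$ in degree $k$ is controlled by the $t=0$ row $H^k(BPU_n;\Z_{(p)})$ modulo the differentials entering and leaving it, together with the multiplicative action of the coefficients $v_i$ (with $\dim v_i = -(2p^i - 2)$) coming from lower filtration.

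**First I would** recall the structure of $H^*(BPU_n;\Z_{(p)})$ in low degrees. From the fibration $BS^1 \to BU_n \to BPU_n$ (equivalently from the universal cover $SU_n \to PU_n$ and its classifying fibration), the integral cohomology of $BPU_n$ in low dimensions is generated by the even classes pulled back from $BU_n$ together with a distinguished $p$-torsion class. The first odd-dimensional torsion class appears in degree $3$: there is a class $x_1 \in H^3(BPU_n;\Z_{(p)})$ of order $p$, arising from the connecting map in the fibration, and one knows (e.g.\ from \cite{gu2019cohomology}) the structure through degree $10$, which already covers a substantial part of the range when $p=3$. More generally, the $p$-torsion in this range is built from $x_1$ and the classes $y_{p,k}$ mentioned in the abstract. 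The crucial point is to locate all odd-dimensional classes in degrees $\leq 2(p+1) = 2p+2$ and to show each of them supports or receives a nontrivial AHSS differential, hence dies in the passage to the image of $T$.

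**The main obstacle** is controlling the first possibly nonzero differential on the odd-dimensional torsion class $x_1 \in H^3$. The relevant differential is $d_{2p-1}: E_{2p-1}^{s,t} \to E_{2p-1}^{s+2p-1, t-2p+2}$, which is the first AHSS differential in $\BP$-theory and is governed by the Milnor primitive $Q_1$ (or equivalently the degree-$(2p-1)$ $k$-invariant of $\BP$, reducing to the Milnor operation $Q_1$ modulo $p$ and modulo decomposables). I expect to show that $x_1$ survives to the $E_{2p-1}$ page and that $d_{2p-1}(x_1) = v_1 \otimes Q_1(x_1)$ up to a unit, where $Q_1(x_1) \in H^{2p+2}(BPU_n;\Z/p)$ is nonzero; this is exactly the kind of computation carried out by Kono–Yagita \cite{kono1993brown} for $BPU_p$. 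Establishing the nontriviality of $Q_1(x_1)$, and checking that no competing differential or extension reintroduces an odd class, is where the real work lies.

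**Finally I would** assemble the pieces: having shown that every odd-dimensional generator in degrees $3, 5, \ldots, 2p+1$ either supports a nonzero $d_{2p-1}$ (killing it in $E_\infty$) or is itself the target of such a differential from a lower even class, I conclude that the $E_\infty$-page, and therefore $\BP^*(BPU_n) \otimes_{\BP^*} \Z_{(p)}$, has no contribution in odd degrees within the stated range. Since the odd part of the image of $T$ is precisely the odd part of this associated graded, the image of the Thom map concentrates in even dimensions for $0 \leq k \leq 2(p+1)$, as claimed. An alternative, more self-contained route would be to compute $\BP^*(BPU_n)$ directly in this range via the fibration and a Gysin/transfer argument rather than the full AHSS, but the AHSS approach seems the most transparent way to isolate the odd classes and their fate.
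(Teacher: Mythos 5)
Your reduction to odd degrees is the same as the paper's: by Theorem \ref{thm:2p+2} the only odd-dimensional classes of $H^k(BPU_n;\Z_{(p)})$ for $0<k\leq 2p+2$ live in degree $3$, where the group is $\Z_{p^r}$ with $r$ the $p$-adic valuation of $n$. The divergence, and the gap, is in how you dispose of degree $3$. Your AHSS argument correctly identifies the image of $T$ in degree $3$ with the permanent cycles $E_\infty^{3,0}$, and $d_{2p-1}(x_1)=v_1\otimes Q_1(x_1)$ up to a unit with $Q_1(x_1)=\bar{y}_{p,0}\neq 0$ when $p\mid n$. But this only kills $kx_1$ for $p\nmid k$: the target of $d_{2p-1}$ is annihilated by $p$ (the class $y_{p,0}$ has order $p$), so $d_{2p-1}(px_1)=0$ and $px_1$ survives to $E_{2p}^{3,0}$. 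When $r\geq 2$ (for instance $n=p^2$) the theorem asserts that $px_1,\dots,p^{r-1}x_1$ are also not in the image of $T$ --- see the remark following Theorem \ref{thm:main} --- and your proposal gives no mechanism for this: the next candidate differential on the subgroup generated by $px_1$ is not computed by $Q_2$ applied to $x_1$ in any straightforward way, and you would be forced into secondary-operation territory. (A smaller confusion: classes in the $t=0$ row of the $\BP$-AHSS can never be \emph{targets} of differentials, since $\BP^t=0$ for $t>0$, so the ``is itself the target of such a differential from a lower even class'' branch of your case analysis is vacuous.)

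The paper avoids all of this with a restriction argument: it constructs an abelian subgroup $V_n\cong\Z_n\times\Z_n$ of $PU_n$ (generalizing Vistoli's subgroup of $PU_p$) and shows that restriction induces an isomorphism $H^3(BPU_n;\Z)\to H^3(BV_n;\Z)$ (Lemma \ref{lem:H3 restriction}). Since $V_n$ is abelian, Landweber's theorem gives $\BP^3(BV_n)=0$, so naturality of the Thom map forces the entire image of $T$ in $H^3(BPU_n;\Z_{(p)})$ to vanish --- all of $\Z_{p^r}$ at once, with no differentials to compute. To salvage your route you would at minimum need to restrict to $r=1$, or supply an independent argument for the higher multiples of $x_1$.
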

\begin{remarkthm}
We have the ``canonical Brauer class'' denoted by $x_1$ generating the group
\[H^3(BPU_n,\Z_{(p)})\cong\Z_{p^r},\]
where $r$ is the $p$-adic valuation of $n$. In other words, we have $n=p^rm$ for with $p\nmid m$. This class plays an important role in the calculation of $H^*(BPU_n;\Z)$ in \cite{gu2019cohomology}. However, Theorem \ref{thm:main} indicates that any $kx_1$ for $p^r\nmid k$ is not in the image of the Thom map.
\end{remarkthm}
For the next theorem, we note that there are $p$-torsion classes
\[y_{p,k}\in H^{2p^{k+1}+2}(BPU_n;\Z_{(p)}),\ k\geq0,\]
which are studied in \cite{gu2019cohomology} and discussed in more details in Section \ref{sec:ordinary coh}.
\begin{theorem}\label{thm:eta p,k}
Let $p$ be an odd prime. For $k\geq 0$ and $p|n$, there are classes
\begin{equation*}
\eta_{p,k}\in\BP^{2p^{k+1}+2}(BPU_n)
\end{equation*}
satisfying
\[T(\eta_{p,k})=y_{p,k}\in H^{2p^{k+1}+2}(BPU_n;\Z_{(p)})\]
where $T$ is the Thom map.
\end{theorem}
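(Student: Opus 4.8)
The plan is to realize $y_{p,k}$ as the value of a Milnor primitive on the canonical class and then to prove, via the Atiyah--Hirzebruch spectral sequence, that it survives to $\BP$-cohomology. Write $\bar{x}_1\in H^3(BPU_n;\F_p)$ for the mod $p$ reduction of $x_1$; it is nonzero precisely because $p\mid n$. As recalled in Section~\ref{sec:ordinary coh}, $y_{p,k}=\tilde\beta\bigl(P^{p^k}\cdots P^1\bar{x}_1\bigr)$, where $\tilde\beta$ is the integral Bockstein, and its mod $p$ reduction is the Milnor primitive $Q_{k+1}(\bar{x}_1)$ (the lower primitives vanishing on $\bar{x}_1$). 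Since the Thom map and the operations in play are natural, the first step is to reduce the statement to producing $\eta_{p,k}$ as a permanent cycle representing $y_{p,k}$ in the $\BP$-theoretic spectral sequence for $BPU_n$.

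I would then set up the spectral sequence $E_2^{s,t}=H^s(BPU_n;\BP^t)\Rightarrow\BP^{s+t}(BPU_n)$. The class $y_{p,k}$ lies on the bottom row $E_2^{*,0}=H^*(BPU_n;\Z_{(p)})$, and the Thom map is exactly the edge homomorphism onto $E_\infty^{*,0}$; since no differential can hit a class of filtration zero, $y_{p,k}$ lies in the image of $T$ if and only if it is a permanent cycle. The only differentials leaving the bottom row are the $d_{2p^i-1}$, whose leading term is $\BP^*$-linear multiplication by $v_i$ followed by the Milnor primitive $Q_i$, so the obstructions are the classes $v_i\,Q_iQ_{k+1}(\bar{x}_1)$. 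The Bockstein term vanishes because $Q_0\bar{x}_1=\beta\bar{x}_1=0$, and the diagonal differential $d_{2p^{k+1}-1}$ vanishes because $Q_{k+1}^2=0$; by the anticommutativity $Q_iQ_{k+1}=-Q_{k+1}Q_i$ the remaining obstructions are $v_i\,Q_{k+1}Q_i(\bar{x}_1)$ for $i\geq1$ with $i\neq k+1$.

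The main obstacle is the vanishing of these cross terms, i.e.\ proving $Q_iQ_{k+1}(\bar{x}_1)=0$ in $H^*(BPU_n;\F_p)$ for all $i\geq1$, equivalently that every class $y_{p,j}$ is annihilated by all Milnor primitives. This is genuinely a feature of $BPU_n$ and cannot be obtained by naturality from $K(\Z,3)$: the map $f\colon BPU_n\to K(\Z,3)$ classifying $x_1$ pulls $y_{p,k}$ back from $Q_{k+1}\iota_3$, yet in $H^*(K(\Z,3);\F_p)$ the classes $Q_iQ_{k+1}\iota_3$ are nonzero, so any lift must use relations present in $BPU_n$ but absent in $K(\Z,3)$. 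I would deduce the required vanishing from the explicit structure of $H^*(BPU_n;\F_p)$ as a module over the Steenrod algebra in the relevant range, as determined in \cite{gu2019cohomology} and in Section~\ref{sec:ordinary coh}, where the interaction of the $y_{p,j}$ with the polynomial generators arising from the fibration $BU_n\to BPU_n\xrightarrow{f}K(\Z,3)$ forces the Milnor primitives to act trivially on them. It then remains to exclude higher differentials beyond the leading $v_iQ_i$ terms, which I would do by a sparseness check on the target bidegrees $E^{s,t}$; equivalently, one may lift the odd class $z_k=P^{p^k}\cdots P^1\bar{x}_1$ through the mod $p$ Thom map $(\BP/p)^*(BPU_n)\to H^*(BPU_n;\F_p)$ and then apply the $\BP$-theoretic Bockstein $\delta$, using $T\circ\delta=\tilde\beta\circ T$ to conclude.
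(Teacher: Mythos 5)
The proposal takes a different route from the paper (an Atiyah--Hirzebruch obstruction argument directly on $BPU_n$ rather than pulling back a lift from $K(\Z,3)$), but it rests on a false premise and leaves its key step unproved. Your central claim --- that $Q_iQ_{k+1}\iota_3\neq 0$ in $H^*(K(\Z,3);\Z_p)$, so that the required vanishing ``cannot be obtained by naturality from $K(\Z,3)$'' --- is incorrect. By Tamanoi's theorem (Theorem~\ref{thm:Tamanoi}), the image of the Thom map for $K(\Z,3)$ is the $\mathscr{A}^*$-invariant polynomial algebra $\Z_p[Q_s(\tau_3)\mid s>0]$, which is concentrated in even degrees; since $Q_{k+1}(\tau_3)$ lies in this image and the image is closed under the $Q_i$, the odd-degree class $Q_iQ_{k+1}(\tau_3)$ must vanish. (More generally, the image of the Thom map of any space is annihilated by every Milnor primitive.) Thus the obstructions you identify vanish already on $K(\Z,3)$ and are pulled back to zero along $\chi\colon BPU_n\to K(\Z,3)$ --- which is exactly the mechanism the paper exploits: Tamanoi's theorem produces a $\BP$-lift of $y_{p,k}$ on $K(\Z,3)$ itself (using the injectivity of mod $p$ reduction in degrees $>3$, Corollary~\ref{cor:K(Z,3)p-tor}, to pass from $\Z_p$- to $\Z_{(p)}$-coefficients), and naturality of $T$ with respect to $\chi^*$ finishes the proof. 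Your rejection of this route causes you to discard the one-line argument that actually works.

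Having discarded it, the substitute you propose has a genuine gap. You would establish $Q_iQ_{k+1}(\bar{x}_1)=0$ in $H^*(BPU_n;\Z_p)$ for all $i\geq 1$ ``from the explicit structure of $H^*(BPU_n;\Z_p)$ as a module over the Steenrod algebra in the relevant range, as determined in \cite{gu2019cohomology}.'' But the relevant range is unbounded: these classes live in degrees $2p^{k+1}+2p^i+1$ for arbitrary $i$ and $k$, while \cite{gu2019cohomology} determines the cohomology of $BPU_n$ only in low degrees (the integral ring structure up to degree $10$, plus specific torsion classes); the full $\mathscr{A}^*$-module structure of $H^*(BPU_n;\Z_p)$ is not known for general $n$. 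The same objection applies to the unexecuted ``sparseness check'' needed to rule out contributions beyond the leading $v_i\otimes Q_i$ terms of the Atiyah--Hirzebruch differentials, and to the alternative sketch via $(\BP/p)^*$ and a $\BP$-Bockstein. As written, the argument does not close; if you want to salvage the spectral-sequence framing, the efficient way to kill all the obstructions is precisely to observe that they are pulled back from $K(\Z,3)$, where they vanish by Theorem~\ref{thm:Tamanoi}.
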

\begin{remarkthm}
Localizing at $p$ the homotopy fiber sequence $B\Z_n\rightarrow BSU_n\rightarrow BPU_n$, we obtain a $p$-local homotopy equivalence $BSU_n\xrightarrow{\simeq_{(p)}} BPU_n$ in the case $p\nmid n$. Therefore this case is not very interesting.
\end{remarkthm}
\begin{remarkthm}
In \cite{gu2019some}, the author constructs $p$-torsion classes $\rho_{p,k}$, $k\geq 0$ in the Chow ring of $BPU_n$ satisfying $\opn{cl}(\rho_{p,k})=y_{p,k}$, when the $p$-adic valuation of $n$ is $1$. Theorem \ref{thm:eta p,k} and the existence of the refined cycle class map as in \eqref{eq:cl refined} support the conjecture that the classes $\rho_{p,k}$ exist for $n$ with $p$-adic valuation greater than $1$.
\end{remarkthm}
\begin{remarkthm}
The classes $\eta_{p,k}$ are not $p$-torsion classes in general. As pointed out by N. Yagita, it follows from Theorem 1.4 of Kameko and Yagita \cite{kameko2008brown}, for $n=p$, that the class $\eta_{p,0}$ satisfies
\[p\eta_{p,0}=v_2\eta_{p,0}^p+\cdots\neq 0,\]
where the class on the right side is trivial modulo the ideal $(v_2,v_3,\cdots,v_n,\cdots)$ of the ring $\BP^*$.
\end{remarkthm}
This paper is organized as follows. In Section \ref{sec:ordinary coh} we review some results on the ordinary cohomology of $BPU_n$, most of which are proved in \cite{gu2019cohomology} and \cite{gu2019some}. In Section \ref{sec:p-primary} we prove Theorem \ref{thm:main} by studying an $n$-torsion abelian subgroup of $PU_n$, an idea inspired by Vistoli \cite{vistoli2007cohomology}. Finally, in Section \ref{sec:eta}, we construct the classes $\eta_{p,k}$ and prove Theorem \ref{thm:eta p,k}.
\subsection*{Acknowledgement}
The author thanks B. Totaro and N. Yagita for pointing out several errors in an earlier version. This paper is written during a visit to the Max Planck Institute for Mathematics (MPIM), in the midst of the COVID-19 pandemic. The author would like to thank MPIM for their supports in various ways during this difficult time.

\section{On the ordinary cohomology of $BPU_n$}\label{sec:ordinary coh}
In this section we consider the ordinary cohomology of $BPU_n$. For the most part of this section, we reformulate results in \cite{gu2019cohomology}. By definition we have a short exact sequence of Lie groups
\[1\rightarrow\Z_n\rightarrow SU_n\rightarrow PU_n\rightarrow 1\]
which gives a universal cover of $PU_n$ and shows $\pi_1(PU_n)\cong\Z_n$. It follows from the Hurewicz theorem and the universal coefficient theorem that we have
\begin{equation}\label{eq:H123}
H^k(BPU_n;\Z)\cong
\begin{cases}
0,\hspace{2 mm}k=1,2,\\
\Z_n,\hspace{2 mm}k=3.
\end{cases}
\end{equation}
Consider the short exact sequence of Lie groups
\[1\rightarrow S^1\rightarrow U_n\rightarrow PU_n\rightarrow 1\]
which defines the Lie group $PU_n$. Taking classifying spaces, we obtain a homotopy fiber sequence
\[BS^1\rightarrow BU_n\rightarrow BPU_n.\]
Notice that $BS^1$ is of the homotopy type of the Eilenberg-Mac Lane space $K(\Z,2)$. Delooping $BS^1$, we obtain another homotopy fiber sequence
\begin{equation}\label{eq:BPUn fib seq}
BU_n\rightarrow BPU_n\xrightarrow{\chi}K(\Z,3).
\end{equation}
Here the map
\begin{equation}\label{eq:chi}
\chi: BPU_n\rightarrow K(\Z,3)
\end{equation}
defines a generator $x_1$ of $H^3(BPU_n;\Z)\cong\Z_n$, or $H^3(BPU_n;\Z_{(p)})\cong\Z_{p^r}$, where $r$ is the $p$-adic valuation of $n$, i.e., we have $n=p^rm$ with $p\nmid m$. We call $x_1$ the canonical Brauer class of $BPU_n$.

In principle, the homology of $K(\pi,n)$ for any finitely generated abelian group $\pi$ and any $n>0$ is determined in \cite{Ca}. Tamanoi \cite{tamanoi1999subalgebras} offers a description of the mod $p$ cohomology
of $K(\pi,n)$ in terms of the Milnor basis (\cite{milnor1958steenrod}) of the mod $p$ Steenrod algebra. In \cite{gu2019cohomology}, the author gives a description of the cohomology of $K(\Z,3)$, which is consistent with the notations in this paper. 

Throughout the rest of this paper, we denote by $\mathscr{P}^k$ the $k$th Steenrod reduced power operation, and $\delta$ the Bockstein homomorphism $H^*(-;\Z_p)\rightarrow H^{*+1}(-;\Z)$.

The following lemma is well known and can be easily deduced from Section 2 of \cite{gu2019cohomology}.
\begin{lemma}\label{lem:K(Z,3)}
In dimensions $0<i\leq 2p+4$, we have
\begin{equation*}
H^i(K(\Z,3);\Z_{(p)})\cong
\begin{cases}
\Z_{(p)},\ i=3,\\
\Z_p,\ i=2p+2,\\
0,\ 0<i\leq 2p+4,\ i\neq3,\ 2p+2.
\end{cases}
\end{equation*}
Let $x_1\in H^3(K(\Z,3);\Z_{(p)})$ be the canonical Brauer class. Then the group
\[H^{2p+2}(K(\Z,3);\Z_{(p)})\cong\Z_p\]
is generated by the class $y_{p,0}=\delta\mathscr{P}^1(x_1)$. The mod $p$ reduction of $y_{p,0}$, denoted by $\bar{y}_{p,0}$, is equal to $Q_1(x_1)$, where $Q_1$ is one of the Milnor's operations defined in \cite{milnor1958steenrod}.
\end{lemma}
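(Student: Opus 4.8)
The plan is to separate the free and torsion parts of $H^*(K(\Z,3);\Z_{(p)})$ and then to identify the single torsion generator explicitly. Since $K(\Z,3)$ is $2$-connected with $\pi_3\cong\Z$, its rational cohomology is the exterior algebra $H^*(K(\Z,3);\Q)\cong\Lambda_\Q(x_1)$ on the fundamental class $x_1$ in degree $3$. Hence the free part of $H^*(K(\Z,3);\Z_{(p)})$ is $\Z_{(p)}$ concentrated in degrees $0$ and $3$, and every group in positive degree $\neq 3$ is finite $p$-torsion. It remains to locate this torsion in the range $0<i\le 2p+4$.

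First I would compute $H^*(K(\Z,3);\F_p)$ in this range using the Cartan--Serre description of it as a free graded-commutative $\F_p$-algebra (see also \cite{tamanoi1999subalgebras}). In the range $i\le 2p+4$ its only algebra generators are $\bar x_1$ in degree $3$, $\mathscr P^1\bar x_1$ in degree $2p+1$, and $\beta\mathscr P^1\bar x_1$ in degree $2p+2$: indeed $\mathscr P^2\bar x_1=0$ because $2\cdot 2>3=\deg\bar x_1$, and the next algebra generators occur in degree $2p^2+1$ or higher, far outside the range. As $\bar x_1$ and $\mathscr P^1\bar x_1$ are odd-degree and hence square-zero, the only decomposable contributing in the range is the product $\bar x_1\,\mathscr P^1\bar x_1$ in degree $2p+4$. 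Thus $H^i(K(\Z,3);\F_p)$ is one-dimensional for $i\in\{0,3,2p+1,2p+2,2p+4\}$ and zero otherwise for $0<i\le 2p+4$.

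Next I would pass from $\F_p$- to $\Z_{(p)}$-coefficients via the Bockstein spectral sequence (equivalently, universal coefficients). Because $\beta\bar x_1=0$, the Milnor operation satisfies $Q_1\bar x_1=(\mathscr P^1\beta-\beta\mathscr P^1)\bar x_1=-\beta\mathscr P^1\bar x_1$, which is nonzero; hence the first Bockstein $\beta\colon H^{2p+1}(K(\Z,3);\F_p)\to H^{2p+2}(K(\Z,3);\F_p)$ is an isomorphism of one-dimensional spaces. A pair of $\F_p$'s matched by a first Bockstein contributes a single $\Z_p$ summand to integral cohomology, placed in the higher degree $2p+2$, and no $p$-torsion in degree $2p+1$. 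Likewise $\beta$ is injective on $\bar x_1\,\mathscr P^1\bar x_1$, since $\beta(\bar x_1\,\mathscr P^1\bar x_1)=-\bar x_1\,\beta\mathscr P^1\bar x_1\neq 0$ in degree $2p+5$; therefore this class detects torsion in degree $2p+5$ and gives $H^{2p+4}(K(\Z,3);\Z_{(p)})=0$. Combined with the rational computation, this yields exactly the asserted groups: $\Z_{(p)}$ in degree $3$, $\Z_p$ in degree $2p+2$, and $0$ for all other $i$ with $0<i\le 2p+4$.

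It remains to name the generator. By construction the class detected in degree $2p+2$ is the integral Bockstein $y_{p,0}=\delta\mathscr P^1(x_1)$ of $\mathscr P^1\bar x_1$, and since the mod $p$ Bockstein is the composite of $\delta$ with reduction mod $p$, the reduction of $y_{p,0}$ equals $\beta\mathscr P^1\bar x_1=\pm Q_1(\bar x_1)$, the sign depending only on the convention for $Q_1$. I expect this final identification to be the one genuinely delicate point: the group computation is routine and is essentially recorded in Section 2 of \cite{gu2019cohomology}, whereas verifying that the Bockstein-detected generator is precisely $\delta\mathscr P^1 x_1$ with reduction the Milnor class $Q_1\bar x_1$—rather than merely some generator of $\Z_p$—requires tracking the operations through the universal-coefficient sequence. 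As an alternative route giving the same bookkeeping, one may run the Serre spectral sequence of the path--loop fibration $K(\Z,2)\to *\to K(\Z,3)$, where the transgression $d_3(\iota_2)=x_1$ and the Kudo transgression theorem control the differentials; the class $\iota_2^p$ then forces the $\Z_p$ in degree $2p+2$ via $d_{2p-1}$.
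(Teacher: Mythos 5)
Your argument is correct, and it is considerably more than the paper provides: the paper disposes of this lemma with a one-line appeal to its being ``well known and easily deduced from Section 2 of \cite{gu2019cohomology}'' (where the computation is carried out via the structure of $H^*(K(\Z,3);\Z_p)$ as a free graded-commutative algebra over the Steenrod algebra). Your route --- rational cohomology for the free part, the Cartan--Serre generators $\bar x_1$, $\mathscr P^1\bar x_1$, $\beta\mathscr P^1\bar x_1$ in the stated range together with the single decomposable $\bar x_1\,\mathscr P^1\bar x_1$ in degree $2p+4$, and then the first Bockstein pairing to place the torsion --- is the standard one and all the individual steps check out: $\mathscr P^2\bar x_1=0$ for degree reasons, the next generators indeed sit in degree $2p^2+1>2p+4$, the $d_1$-pairing $(2p+1,2p+2)$ produces exactly one $\Z_p$ in degree $2p+2$, and the injectivity of $\beta$ on $\bar x_1\,\mathscr P^1\bar x_1$ pushes the remaining torsion out to degree $2p+5$, so $H^{2p+4}$ vanishes. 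Your identification of the generator as $\delta\mathscr P^1(x_1)$ with reduction $\beta\mathscr P^1\bar x_1=-Q_1(\bar x_1)$ is also right; the sign relative to the paper's statement $\bar y_{p,0}=Q_1(x_1)$ is purely a matter of the convention for $Q_1$ (and is immaterial for the claim that the class generates $\Z_p$), so your caution there is appropriate rather than a gap. In short, what your approach buys is a self-contained verification that the reader of this paper would otherwise have to chase through the reference.
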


We also note the following
\begin{lemma}\label{lem:K(Z,3)p-tor}
All torsion classes in the graded abelian group $H^*(K(\Z,3);\Z_{(p)})$ are $p$-torsion classes. In other words, the abelian groups $H^k(K(\Z,3);\Z_{(p)})$ are $p$-torsion groups for $k>3$.
\end{lemma}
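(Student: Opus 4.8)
The plan is to reduce the statement to the collapse of the mod $p$ Bockstein spectral sequence at its second page. First I would record that $K(\Z,3)$ is rationally equivalent to an odd sphere, so that $H^*(K(\Z,3);\Q)\cong\Lambda(x_1)$ is concentrated in degrees $0$ and $3$. Consequently, for every $k>3$ the group $H^k(K(\Z,3);\Z_{(p)})$ is a finite $p$-primary group, and the lemma becomes equivalent to the assertion that each such group is annihilated by $p$, i.e. that $H^*(K(\Z,3);\Z)_{(p)}$ has no torsion of order $p^2$ or higher.

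To control the orders of the torsion I would invoke the mod $p$ Bockstein spectral sequence $\{E_r,d_r\}$, with $E_1=H^*(K(\Z,3);\F_p)$ and $d_1=\beta$ the mod $p$ Bockstein (Milnor's operation $Q_0$, the reduction of $\delta$). I would then use the standard Cartan--Serre structure of the $\F_p$-cohomology, compatible with the description in \cite{gu2019cohomology}: writing $u_0$ for the mod $p$ reduction of $x_1$, the iterated transgressions $a_i=\mathscr{P}^{p^{i-1}}\cdots\mathscr{P}^p\mathscr{P}^1 u_0$ in the odd degrees $2p^i+1$ and their Bocksteins $b_i=\beta a_i$ in the even degrees $2p^i+2$ give
\[
H^*(K(\Z,3);\F_p)\cong\Lambda(u_0,a_1,a_2,\dots)\otimes\F_p[b_1,b_2,\dots],
\]
with $\beta u_0=0$, $\beta a_i=b_i$ and $\beta b_i=0$. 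The decisive structural feature is that every polynomial (even) generator $b_i$ is the Bockstein of an exterior (odd) generator $a_i$, so there is no even generator on which $\beta$ is nonzero.

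Next I would compute $E_2=H(E_1,\beta)$. Since $\beta$ is a derivation, the displayed algebra is the tensor product of the sub-differential-algebras $\Lambda(u_0)$, on which $\beta=0$, and the Koszul factors $\Lambda(a_i)\otimes\F_p[b_i]$, on which $\beta a_i=b_i$; in each total degree only finitely many factors contribute a positive-degree element, since the degrees of $a_i$ and $b_i$ tend to infinity. A one-line computation shows each Koszul factor is acyclic except in degree $0$, so the Künneth theorem over $\F_p$ yields $E_2\cong\Lambda(u_0)$, concentrated in degrees $0$ and $3$. As $E_\infty$ is the mod $p$ reduction of the free part of $H^*(K(\Z,3);\Z)_{(p)}$, which by the first paragraph also has total rank one in degrees $0$ and $3$ and vanishes elsewhere, comparing dimensions degreewise forces $E_2=E_\infty$. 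Hence $d_r=0$ for all $r\ge2$, every torsion class is of order exactly $p$, and the lemma follows.

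The main obstacle is the input structure theorem for $H^*(K(\Z,3);\F_p)$, and specifically the verification that no even polynomial generator carries a nonzero Bockstein: it is precisely this that excludes factors of the form $\F_p[b_i^p]$ in $E_2$ and thereby rules out $\Z_{p^2}$-torsion (and higher). Everything after that is the formal acyclicity-and-Künneth bookkeeping together with the elementary comparison of $E_2$ with $E_\infty$ against the rational cohomology.
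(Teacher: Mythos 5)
Your argument is correct, but it takes a genuinely different route from the paper. The paper disposes of this lemma in one line by citing Proposition 2.14 of \cite{gu2019cohomology}, which already records the complete structure of the graded $\Z_{(p)}$-algebra $H^*(K(\Z,3);\Z_{(p)})$; the absence of torsion of order $p^2$ or higher is then read off directly. You instead rederive the statement from weaker inputs: the rational cohomology of $K(\Z,3)$ (to see that $H^k(K(\Z,3);\Z_{(p)})$ is finite $p$-primary for $k>3$, and to identify $E_\infty$ of the Bockstein spectral sequence) together with the Cartan--Serre presentation $H^*(K(\Z,3);\F_p)\cong\Lambda(u_0,a_1,a_2,\dots)\otimes\F_p[b_1,b_2,\dots]$ with $\beta u_0=0$, $\beta a_i=b_i$, $\beta b_i=0$. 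Your Koszul computation of $E_2=H(E_1,\beta)\cong\Lambda(u_0)$ is right (each factor $\bigl(\Lambda(a_i)\otimes\F_p[b_i],\ \beta a_i=b_i\bigr)$ is acyclic in positive degrees, and the tensor product is finite in each total degree), and the degreewise comparison with $E_\infty$ correctly forces collapse at $E_2$, hence no $\Z_{p^r}$-summands for $r\geq 2$. You have also isolated the genuinely decisive point: every even polynomial generator is the Bockstein of an odd exterior generator, which is exactly what fails for $K(\Z,n)$ with larger $n$ and is what rules out surviving factors such as $\F_p[b_i^p]$. What the paper's citation buys is brevity; what your argument buys is self-containedness modulo the classical mod $p$ computation, and an explanation of \emph{why} the torsion is exactly of order $p$ rather than merely a verification against a table.
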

\begin{proof}
This follows immediately from Proposition 2.14 of \cite{gu2019cohomology}, which gives a complete description of the graded $\Z_{(p)}$-algebra $H^*(K(\Z,3);\Z_{(p)})$.
\end{proof}
\begin{corollary}\label{cor:K(Z,3)p-tor}
For $k>3$, the mod $p$ reduction
\[H^k(K(\Z,3);\Z_{(p)})\rightarrow H^k(K(\Z,3);\Z_{p})\]
is injective.
\end{corollary}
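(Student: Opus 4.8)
The plan is to derive this directly from Lemma \ref{lem:K(Z,3)p-tor} by means of the Bockstein long exact sequence attached to the coefficient short exact sequence
\[0\to\Z_{(p)}\xrightarrow{\ p\ }\Z_{(p)}\to\Z_p\to 0.\]
First I would observe that the mod $p$ reduction map in the statement is exactly the map $\rho$ induced by the surjection $\Z_{(p)}\to\Z_p$ above. Applying $H^*(K(\Z,3);-)$ to this short exact sequence of coefficients produces the long exact sequence
\[\cdots\to H^k(K(\Z,3);\Z_{(p)})\xrightarrow{\ p\ }H^k(K(\Z,3);\Z_{(p)})\xrightarrow{\ \rho\ }H^k(K(\Z,3);\Z_p)\to\cdots,\]
in which the first map is multiplication by $p$. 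By exactness at the middle term, $\Ker\rho$ equals the image of multiplication by $p$, that is, the subgroup $p\cdot H^k(K(\Z,3);\Z_{(p)})$.

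The second step is to feed in Lemma \ref{lem:K(Z,3)p-tor}. For $k>3$ that lemma asserts that $H^k(K(\Z,3);\Z_{(p)})$ consists entirely of $p$-torsion classes, so it is annihilated by $p$ and is in fact an $\F_p$-vector space. Consequently $p\cdot H^k(K(\Z,3);\Z_{(p)})=0$ for $k>3$, whence $\Ker\rho=0$ and $\rho$ is injective, which is the assertion of the corollary.

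The argument is essentially formal once Lemma \ref{lem:K(Z,3)p-tor} is available, so there is no genuine obstacle; the only point demanding care is the precise reading of the phrase ``$p$-torsion.'' The Bockstein argument requires that multiplication by $p$ vanish \emph{on the nose} on $H^k(K(\Z,3);\Z_{(p)})$ for $k>3$, i.e.\ that these groups be annihilated by $p$ rather than merely $p$-primary. I would therefore take care to confirm that Lemma \ref{lem:K(Z,3)p-tor} (equivalently, Proposition 2.14 of \cite{gu2019cohomology}) yields annihilation by $p$ itself, and only then conclude injectivity of $\rho$.
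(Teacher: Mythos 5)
Your argument is correct and is precisely the paper's own proof: the long exact sequence of the coefficient sequence $0\to\Z_{(p)}\xrightarrow{\times p}\Z_{(p)}\to\Z_p\to 0$ identifies $\Ker\rho$ with $p\cdot H^k(K(\Z,3);\Z_{(p)})$, which vanishes for $k>3$ by Lemma \ref{lem:K(Z,3)p-tor}. Your caveat about the reading of ``$p$-torsion'' is well placed, and the cited Proposition 2.14 of \cite{gu2019cohomology} does show these groups are annihilated by $p$ itself (not merely $p$-primary), so the argument closes as you describe.
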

\begin{proof}
This follows immediately from Lemma \ref{lem:K(Z,3)p-tor} and the long exact sequence induced by the short exact sequence
\[0\rightarrow\Z_{(p)}\xrightarrow{\times p}\Z_{(p)}\rightarrow\Z_p\rightarrow 0.\]
\end{proof}
In the cohomology ring $H^*(K(\Z,3);\Z_{(p)})$ we have $p$-torsion classes
\[y_{p,k}=\delta\mathscr{P}^{p^k}\mathscr{P}^{p^{k-1}}\cdots\mathscr{P}^1(x_1),\hspace{2 mm}k\geq 0\]
of dimension $2p^{k+1}+2$, where $x_1\in H^3(BPU_n;\Z)$ is the canonical Brauer class. Let $\bar{y}_{p,k}\in H^*(K(\Z,3);\Z_p)$ be the mod $p$ reduction of $y_{p,k}$, and we have $\bar{y}_{p,k}=Q_{k+1}(x_1)$ where $Q_{k+1}$ is the Milnor's operation considered in \cite{milnor1958steenrod}.

In \cite{gu2019cohomology} and \cite{gu2019some}, the author studies the images of the classes $y_{p,k}$ under
\[\chi^*: H^*(K(\Z,3);\Z_{(p)})\rightarrow H^*(BPU_n;\Z_{(p)}).\]
In the following theorems, we abuse notations and denote $\chi^*(y_{p,k})$ simply by $y_{p,k}$.
\begin{theorem}[Theorem 1.2, \cite{gu2019cohomology}]\label{thm:2p+2}
Let $p$ be a prime. In $H^{2p+2}(BPU_{n};\mathbb{Z})$, we have $y_{p,0}\neq 0$ of order $p$ when $p|n$, and $y_{p,0}=0$ otherwise. Furthermore, the $p$-torsion subgroup of $H^k(BPU_n;\mathbb{Z})$ is $0$ for $3<k<2p+2$.
\end{theorem}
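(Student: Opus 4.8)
The plan is to run the Serre spectral sequence of the fibration \eqref{eq:BPUn fib seq}, $BU_n\to BPU_n\xrightarrow{\chi}K(\Z,3)$, with $\Z_{(p)}$ coefficients. Since $K(\Z,3)$ is simply connected and $H^*(BU_n;\Z_{(p)})\cong\Z_{(p)}[c_1,\dots,c_n]$ is a free $\Z_{(p)}$-module concentrated in even degrees, the $E_2$-page is the K\"unneth tensor product $E_2^{s,t}=H^s(K(\Z,3);\Z_{(p)})\otimes_{\Z_{(p)}}H^t(BU_n;\Z_{(p)})$. By Lemma \ref{lem:K(Z,3)}, in total degree $\le 2p+2$ the base contributes only the three columns $s=0$, $s=3$ (spanned by $x_1$), and $s=2p+2$ (spanned by the order-$p$ class $y_{p,0}$). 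If $p\nmid n$ there is nothing to prove: $H^*(BPU_n;\Z_{(p)})\cong H^*(BSU_n;\Z_{(p)})$ is torsion free, so $y_{p,0}=0$ and no $p$-torsion occurs. Thus I would assume $p\mid n$ and write $n=p^rm$ with $p\nmid m$.

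Next I would pin down the first differential. The transgression $d_3(c_1)=nx_1$ follows from naturality of transgression applied to the central inclusion $U_1\hookrightarrow U_n$, $z\mapsto zI$ (under which $c_1$ pulls back to $n$ times the generator of $H^2(BU_1;\Z_{(p)})$), compared with the path fibration $BU_1\to *\to K(\Z,3)$. Passing to the maximal torus $T\subset U_n$ and invoking the $S_n$-symmetry of the resulting fibration, one obtains $d_3|_{H^*(BT)}=x_1\cdot\sum_i\partial_{t_i}$, and hence $d_3=x_1\cdot D$ on $H^*(BU_n;\Z_{(p)})$, where $D$ is the derivation with $D(c_k)=(n-k+1)c_{k-1}$. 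The decisive arithmetic observation is that, since $p\mid n$, the coefficient $n-k+1\equiv 1-k\pmod p$ is a $p$-adic unit for every $2\le k\le p$, while $D(c_1)=n$ is not.

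The hard part is then the purely algebraic claim that $D\colon H^{2d+2}(BU_n;\Z_{(p)})\to H^{2d}(BU_n;\Z_{(p)})$ is \emph{surjective} for $1\le d\le p-1$. I would prove this by rescaling the generators to $\tilde c_k$ with $D(\tilde c_k)=\tilde c_{k-1}$ for $2\le k\le p$ and $D(\tilde c_1)=n$, and then showing every monomial $\mu$ of polynomial degree $d\le p-1$ lies in the image by a downward induction on its largest index $\ell$: the element $\tilde c_{\ell+1}\cdot(\mu/\tilde c_\ell)$ is sent by $D$ to $\mu$ plus correction terms whose largest index is exactly $\ell+1$, which are handled at earlier stages of the induction. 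The argument closes precisely because all indices remain $\le p$, so only the unit coefficients $n-k+1$ $(2\le k\le p)$ intervene; the first failure occurs in degree $p$, just outside the range. I expect verifying that this induction genuinely closes, in particular that the $n$-multiple corrections produced by $D(\tilde c_1)=n$ cause no obstruction since they are $n$ times classes already known to be hit, to be the main obstacle.

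Finally I would read off the two conclusions. For $3<k<2p+2$, in even total degree $k=2d$ only the column $s=0$ survives, giving $H^k(BPU_n;\Z_{(p)})\cong\ker D\subset H^{2d}(BU_n;\Z_{(p)})$, a submodule of a free module and hence torsion free; in odd total degree $k=2d+3$ only $s=3$ contributes, and surjectivity of $D$ gives $E_4^{3,2d}=\opn{coker}(D)_d=0$, so $H^k=0$. This shows the $p$-torsion of $H^k(BPU_n;\Z)$ vanishes for $3<k<2p+2$. In total degree $2p+2$ the only possible torsion lies in $E_\infty^{2p+2,0}$, a quotient of $\Z_p\{y_{p,0}\}$: the class $y_{p,0}$ supports no outgoing differential, and the only differential that could hit it is $d_{2p-1}\colon E_{2p-1}^{3,2p-2}\to E_{2p-1}^{2p+2,0}$. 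But $E_4^{3,2p-2}=\opn{coker}(D)_{d=p-1}=0$ by the surjectivity above, so the source already vanishes and $y_{p,0}=\chi^*(y_{p,0})$ survives to $E_\infty$ as a class of order $p$, yielding the first assertion.
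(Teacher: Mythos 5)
This statement is quoted from \cite{gu2019cohomology} (Theorem 1.2 there) and is not proved in the present paper, so there is no internal proof to compare against; your argument is, however, essentially the strategy of that cited reference: run the Serre spectral sequence of $BU_n\to BPU_n\xrightarrow{\chi}K(\Z,3)$, identify $d_3$ as $x_1$ times the derivation $D(c_k)=(n-k+1)c_{k-1}$ (deduced from $d_3(c_1)=nx_1$ and Weyl-group symmetry on the maximal torus), and exploit that the coefficients $n-k+1$ are $p$-adic units for $2\le k\le p$ to kill the cokernel of $D$ in polynomial degrees $\le p-1$. Your surjectivity induction (downward on the largest index of a monomial, with correction terms of strictly larger largest index) does close in the stated range, and the resulting reading of the $E_\infty$-page correctly yields both the vanishing of $p$-torsion for $3<k<2p+2$ and the survival of $y_{p,0}$ as a class of order exactly $p$.
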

\begin{theorem}[(1) of Theorem 1.1, \cite{gu2019some}]\label{thm:2p^k+2}
In $H^{2p^{k+1}+2}(BPU_n;\Z_{(p)})$, we have $p$-torsion classes $y_{p,k}\neq 0$ for all odd prime divisors $p$ of $n$ and $k\geq 0$.
\end{theorem}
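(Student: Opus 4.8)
The plan is to detect the classes $y_{p,k}$ after restricting to an elementary abelian subgroup, where the relevant cohomology is completely explicit. Since $p\mid n$, write $n=pm$ and let $A_0,B_0\in U_p$ be the clock and shift matrices, which satisfy $A_0B_0=\zeta_pB_0A_0$ for a primitive $p$-th root of unity $\zeta_p$. The matrices $A_0\otimes I_m$ and $B_0\otimes I_m$ in $U_n$ have commutator the scalar $\zeta_p I_n$, so their images in $PU_n$ commute and generate a subgroup isomorphic to $\Z_p\times\Z_p$ on which the commutator pairing is nondegenerate and valued in $\mu_p$; this is the type of abelian subgroup used by Vistoli. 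Write $\iota\colon B(\Z_p\times\Z_p)\to BPU_n$ for the induced map. It suffices to prove that $\iota^*y_{p,k}\neq 0$, and I would work mod $p$ throughout, using the identity $\bar y_{p,k}=Q_{k+1}(x_1)$ from the text together with the naturality of the Milnor operations.

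The heart of the argument is a direct computation in
\[
H^*(B(\Z_p\times\Z_p);\Z_p)\cong\Lambda(t_1,t_2)\otimes\Z_p[s_1,s_2],\qquad \opn{dim}t_i=1,\ s_i=Q_0(t_i),
\]
where $Q_0$ is the mod $p$ Bockstein. The restriction $\iota^*x_1$ of the canonical Brauer class is governed by the central extension obtained by pulling back $1\to S^1\to U_n\to PU_n\to 1$ along $\iota$; this is exactly the Heisenberg extension determined by the commutator pairing, whose class is the Bockstein of $t_1t_2$. Hence the mod $p$ reduction of $\iota^*x_1$ equals $Q_0(t_1t_2)=s_1t_2-t_1s_2$. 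Applying $Q_{k+1}$ as a derivation and using $Q_{k+1}(t_i)=s_i^{p^{k+1}}$ together with $Q_{k+1}(s_i)=Q_{k+1}Q_0(t_i)=-Q_0Q_{k+1}(t_i)=0$, I obtain
\[
\iota^*\bar y_{p,k}=Q_{k+1}(s_1t_2-t_1s_2)=s_1\,s_2^{p^{k+1}}-s_1^{p^{k+1}}\,s_2,
\]
which is a nonzero element of the polynomial subalgebra $\Z_p[s_1,s_2]$ for every $k\geq 0$, since the two monomials are distinct once $p^{k+1}>1$.

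From $\iota^*\bar y_{p,k}\neq 0$ it follows that $\bar y_{p,k}\neq 0$ in $H^*(BPU_n;\Z_p)$, and therefore $y_{p,k}\neq 0$ in $H^{2p^{k+1}+2}(BPU_n;\Z_{(p)})$, because $\bar y_{p,k}$ is its mod $p$ reduction. That $y_{p,k}$ is $p$-torsion is automatic from its definition as $\delta$ of a mod $p$ class: anything in the image of the integral Bockstein $\delta\colon H^*(-;\Z_p)\to H^{*+1}(-;\Z_{(p)})$ is annihilated by $p$. As a consistency check, the case $k=0$ recovers Theorem \ref{thm:2p+2}.

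The step I expect to be the main obstacle is the identification of $\iota^*x_1$ with the Heisenberg class, i.e.\ showing that the canonical Brauer class restricts to $Q_0(t_1t_2)$ rather than to zero. This requires a careful analysis of the pulled-back central extension together with the isomorphism $H^2(-;S^1)\cong H^3(-;\Z)$ relating $S^1$-central extensions to degree-three integral classes; equivalently, one must verify that the commutator pairing on the chosen $\Z_p\times\Z_p$ is nondegenerate, so that the associated symplectic form yields the generator $t_1t_2$ of $H^2(B(\Z_p\times\Z_p);\Z_p)$. Once this geometric input is in place, the remainder is a formal computation with Milnor operations.
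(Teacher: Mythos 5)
The statement you are proving is imported into this paper by citation --- it is (1) of Theorem 1.1 of \cite{gu2019some} --- and no proof of it appears here, so there is no internal argument to compare against line by line. Your proposal is correct, and it is essentially the restriction-to-finite-abelian-subgroups technique that Section \ref{sec:p-primary} of this paper attributes to Vistoli and that the cited source uses: your subgroup $\Z_p\times\Z_p$ generated by the images of $A_0\otimes I_m$ and $B_0\otimes I_m$ is precisely the image of $V_p$ under the map $PU_p\to PU_n$ induced by tensoring with $I_m$, and the Heisenberg extension you invoke is the corresponding pullback of $V_p'\to V_p$. The individual steps all check out: the pulled-back $S^1$-central extension has class a \emph{nonzero} multiple of $\delta(t_1t_2)$ in $H^3(B(\Z_p\times\Z_p);\Z)\cong\Z_p$ because the pullback to $U_n$ is noncommutative (the ambiguity by a unit scalar, and by symmetric terms $s_1,s_2$ which are reductions of integral classes and hence killed by $\delta$, is harmless since $Q_{k+1}$ is linear); the identities $Q_{k+1}(t_i)=s_i^{p^{k+1}}$ and $Q_{k+1}(s_i)=0$ together with the signed derivation property give $Q_{k+1}(s_1t_2-t_1s_2)=s_1s_2^{p^{k+1}}-s_1^{p^{k+1}}s_2\neq 0$; and image-of-Bockstein classes are automatically $p$-torsion. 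Two small remarks: the identification of $\iota^*x_1$ with the extension class of the pulled-back $S^1$-bundle, which you flag as the main obstacle, is exactly the content of the comparison diagram \eqref{eq:compare} used in the proof of Lemma \ref{lem:H3 restriction}, so it can be quoted rather than redone; and your consistency check at $k=0$ recovers only the nonvanishing half of Theorem \ref{thm:2p+2} for odd $p$, not its vanishing statements.
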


\section{An $n$-torsion abelian subgroup of $PU_n$}\label{sec:p-primary}
Let $p$ be an odd prime. In \cite{vistoli2007cohomology}, Vistoli considers an abelian $p$-subgroup of $PU_p$, which plays an important role in the study of the Chow ring and cohomology of $BPU_p$. In this section we slightly generalize his construction and prove Theorem \ref{thm:main}.

Let $n>1$ be an integer. Consider the following matrices in $U_n$:
\begin{equation*}
\alpha'=
\begin{bmatrix}
0 & 1\\
I_{n-1} & 0
\end{bmatrix}\\
\textrm{ and }\\
\beta'=
\begin{bmatrix}
\zeta & & & & \\
 & \zeta^2& & &\\
& & \ddots & &\\
& & & \zeta^{n-1} & \\
& & & &1
\end{bmatrix}
\end{equation*}
where $\zeta=\exp(2\pi i/n)$. Let $V_n'$ be the subgroup of $U_n$ generated by $\alpha'$ and $\beta'$. A direct calculation shows
\begin{equation}\label{eq:alpha'beta'}
\beta'\alpha'=\zeta\alpha'\beta'.
\end{equation}
Let $W_n$ be the subgroup of $U_n$ generated by the matrix $\beta'$ and the scalar $\zeta$. Then we have $W_n\cong\Z_n\times\Z_n$, which is a normal subgroup of $V_n'$, and we have the quotient group $V_n'/W_n\cong\Z_n$ generated by the matrix $\alpha'$. Indeed, $V_n'$ is a semidirect product
\begin{equation}\label{eq:V'SES}
V_n'=(\Z_n\times\Z_n)\rtimes_{\phi} C_n
\end{equation}
where $C_n$ is the cyclic group of order $n$. Here we use a different notation than $\Z_n$ since it looks less confusing as we introduce the action $\phi$.
\begin{lemma}\label{lem:phi}
In the definition \eqref{eq:V'SES} of $V_n'$, the action of $\phi$ is as follows: Identify the canonical generator of $C_n$ with the matrix $\alpha'$. Then $\alpha'$ acts on $\Z_n\times\Z_n$ as the matrix
\begin{equation*}
\begin{bmatrix}
1 & -1\\
0 & 1
\end{bmatrix}.
\end{equation*}
\end{lemma}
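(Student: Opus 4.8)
The plan is to read off the action $\phi$ directly from the fact that, in the internal semidirect product structure \eqref{eq:V'SES} with $C_n = \langle\alpha'\rangle$, the canonical generator $\alpha'$ of $C_n$ acts on the normal subgroup $W_n$ by conjugation $w\mapsto \alpha' w(\alpha')^{-1}$. Recall that $W_n$ is generated by $\beta'$ together with the scalar matrix $\zeta I_n$, which I abbreviate by $\gamma$. I would fix the isomorphism $\Z_n\times\Z_n\xrightarrow{\sim} W_n$ by $(a,b)\mapsto \gamma^a(\beta')^b$, so that the first coordinate records the central scalar part and the second the $\beta'$ part. With these conventions in place the proof reduces to evaluating conjugation by $\alpha'$ on the two generators.

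First I would treat the central generator: since $\gamma=\zeta I_n$ is a scalar matrix it lies in the center of $U_n$, so $\alpha'\gamma(\alpha')^{-1}=\gamma$, i.e. $(1,0)\mapsto(1,0)$. Next I would compute the action on $\beta'$ straight from the commutation relation \eqref{eq:alpha'beta'}: multiplying $\beta'\alpha'=\zeta\alpha'\beta'$ on the right by $(\alpha')^{-1}$ gives $\alpha'\beta'(\alpha')^{-1}=\zeta^{-1}\beta'=\gamma^{-1}\beta'$, i.e. $(0,1)\mapsto(-1,1)$. As a sanity check one can verify this on the standard basis of $\C^n$, where $\alpha'$ is the cyclic shift $e_j\mapsto e_{j+1}$ and $\beta'=\operatorname{diag}(\zeta,\dots,\zeta^{n-1},1)$, whence $\alpha'\beta'(\alpha')^{-1}=\operatorname{diag}(1,\zeta,\dots,\zeta^{n-1})=\zeta^{-1}\beta'$. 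Placing the images of the two basis vectors in the columns of a matrix over $\Z_n$ then yields
\[
\phi(\alpha')=\begin{bmatrix} 1 & -1 \\ 0 & 1 \end{bmatrix},
\]
which is the asserted matrix.

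The computation is entirely routine, so I do not expect a genuine mathematical obstacle; the only thing demanding care is the bookkeeping of conventions, and I would flag it explicitly. The sign of the off-diagonal entry depends on whether $\phi$ acts by $w\mapsto\alpha' w(\alpha')^{-1}$ or by the opposite convention $w\mapsto(\alpha')^{-1}w\alpha'$ (the latter would replace $-1$ by $+1$), and whether the matrix comes out upper- or lower-triangular depends on ordering the generators as $(\gamma,\beta')$ rather than $(\beta',\gamma)$. I would therefore state both choices at the outset so that the resulting matrix is compatible with \eqref{eq:V'SES} and with the cohomological computations that build on this lemma; once the conventions are pinned down, the identity above follows immediately.
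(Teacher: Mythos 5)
Your proof is correct and follows essentially the same route as the paper: both compute the conjugation action of $\alpha'$ on the generators of $W_n$, the paper recording it in one line as $\alpha'[\zeta^i(\beta')^j](\alpha')^{-1}=\zeta^{i-j}(\beta')^j$, which is exactly your $(i,j)\mapsto(i-j,j)$. Your explicit attention to the conventions (direction of conjugation, ordering of the generators) is a reasonable elaboration but not a different argument.
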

\begin{proof}
The  action $\phi$ of $\alpha'$ is given by the conjugation action of $\alpha'$ on $W_n$, and a direct computation shows
\begin{equation*}
\alpha'[\zeta^i(\beta')^j](\alpha')^{-1}=\zeta^{i-j}(\beta')^j,
\end{equation*}
and we conclude.
\end{proof}
\begin{lemma}\label{lem:V'}
There is a short exact sequence
\[0\rightarrow\Z_n\rightarrow H^2(BV_n';\Z)\rightarrow\Z_n\rightarrow 0.\]
\end{lemma}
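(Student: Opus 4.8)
The plan is to run the Lyndon--Hochschild--Serre spectral sequence of the group extension
\[1\rightarrow W_n\rightarrow V_n'\rightarrow C_n\rightarrow 1\]
afforded by the semidirect product decomposition \eqref{eq:V'SES}, taking integral coefficients and recording that $C_n$ acts nontrivially on $H^*(BW_n;\Z)$ through the action $\phi$ of Lemma \ref{lem:phi}. The resulting spectral sequence has $E_2^{p,q}=H^p(C_n;H^q(BW_n;\Z))$ converging to $H^{p+q}(BV_n';\Z)$, and the idea is to read off the desired short exact sequence directly from the two-step filtration on $H^2(BV_n';\Z)$, once I have shown that the only surviving entries in total degree $2$ are $E_\infty^{2,0}\cong\Z_n$ and $E_\infty^{0,2}\cong\Z_n$ with $E_\infty^{1,1}=0$.

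First I would assemble the low-degree input. Since $W_n\cong\Z_n\times\Z_n$ is finite we have $H^0(BW_n;\Z)\cong\Z$, $H^1(BW_n;\Z)=0$, and $H^2(BW_n;\Z)\cong\operatorname{Hom}(W_n,\Q/\Z)\cong\Z_n\times\Z_n$. This immediately gives $E_2^{1,1}=H^1(C_n;H^1(BW_n;\Z))=0$ and $E_2^{2,0}=H^2(C_n;\Z)\cong\Z_n$. The essential computation is $E_2^{0,2}=H^2(BW_n;\Z)^{C_n}$: the generator $\alpha'$ of $C_n$ acts on $W_n$ by the unipotent matrix $\bigl[\begin{smallmatrix}1&-1\\0&1\end{smallmatrix}\bigr]$ of Lemma \ref{lem:phi}, hence acts on the dual group $H^2(BW_n;\Z)\cong\operatorname{Hom}(W_n,\Q/\Z)$ by the contragredient (inverse transpose) matrix, which is again unipotent with a unit off-diagonal entry. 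A direct check then shows the invariant subgroup of $\Z_n\times\Z_n$ under such a map is cyclic of order exactly $n$, so $E_2^{0,2}\cong\Z_n$.

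Finally I would verify that these two entries survive. No differential enters $E^{2,0}$ because the only possible source $d_2\colon E_2^{0,1}\to E_2^{2,0}$ has domain $H^1(BW_n;\Z)^{C_n}=0$, and none leaves it for degree reasons; thus $E_\infty^{2,0}\cong\Z_n$. For $E^{0,2}$, the differential $d_2\colon E_2^{0,2}\to E_2^{2,1}=H^2(C_n;H^1(BW_n;\Z))=0$ vanishes, and the transgression-type differential $d_3\colon E_3^{0,2}\to E_3^{3,0}$ lands in a subquotient of $H^3(C_n;\Z)=0$, which vanishes precisely because $C_n$ is cyclic; hence $E_\infty^{0,2}\cong\Z_n$. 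Since $E_\infty^{1,1}=0$, the filtration on $H^2(BV_n';\Z)$ collapses to the short exact sequence
\[0\rightarrow E_\infty^{2,0}\rightarrow H^2(BV_n';\Z)\rightarrow E_\infty^{0,2}\rightarrow 0,\]
which is the asserted sequence $0\rightarrow\Z_n\rightarrow H^2(BV_n';\Z)\rightarrow\Z_n\rightarrow 0$. The main obstacle is the middle step: correctly identifying the induced $C_n$-action on $H^2(BW_n;\Z)$ as the contragredient of $\phi$ and pinning down its invariants as exactly $\Z_n$; the vanishing of the relevant differentials is then forced by the cyclicity of the base $C_n$ together with the vanishing of $H^1(BW_n;\Z)$.
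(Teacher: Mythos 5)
Your proof is correct and follows essentially the same route as the paper: the Lyndon--Hochschild--Serre spectral sequence of $1\to W_n\to V_n'\to C_n\to 1$ with $E_2^{1,1}=0$, $E_2^{2,0}\cong H^2(BC_n;\Z)\cong\Z_n$, and $E_2^{0,2}$ identified as the $\phi$-invariants of $\Z_n\times\Z_n$, which are cyclic of order $n$ by Lemma \ref{lem:phi}. If anything you are more explicit than the paper, which dismisses the differentials by ``degree reasons'': you correctly note that the real point for $d_3\colon E_3^{0,2}\to E_3^{3,0}$ is the vanishing of $H^3(C_n;\Z)$, and you are careful that the action on $H^2(BW_n;\Z)$ is the contragredient of $\phi$ (still unipotent with a unit off-diagonal entry, so the invariants are the same).
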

\begin{remarkthm}
Later in the proof of Lemma \ref{lem:H3 restriction}, we may show 
\[H^2(BV_n';\Z)\cong\Z_n\oplus\Z_n.\]
But this does not seem important in the rest of this paper.
\end{remarkthm}
\begin{proof}
By \eqref{eq:V'SES} we have a short exact sequence
\[1\rightarrow\Z_n\times\Z_n\rightarrow V_n'\rightarrow C_n\rightarrow1.\]
Consider the Lyndon-Hochschild-Serre spectral sequence
\[E_2^{s,t}\cong H^s(BC_n; H^t(B\Z_n\times B\Z_n;\Z)_{\phi})\Rightarrow H^{s+t}(BV_n';\Z),\]
where $H^t(B\Z_n\times B\Z_n;\Z)_{\phi}$ means the local coefficient system induced by $\phi$.

The only nontrivial groups $E_2^{s,t}$ with $s+t=2$ are $E_2^{2,0}$ and $E_2^{0,2}$. For obvious degree reasons there is no nontrivial differential into or out of either of them. Therefore we have $E_2^{2,0}=E_{\infty}^{2,0}$ and $E_2^{0,2}=E_{\infty}^{0,2}$, and a short exact sequence
\begin{equation}\label{eq:H2SES}
0\rightarrow E_2^{2,0}\rightarrow H^2(BV_n';\Z)\rightarrow E_2^{0,2}\rightarrow 0.
\end{equation}
The local coefficient system on the bottom row of the spectral sequence is the constant one, and we have
\[E_2^{2,0}\cong H^2(BC_n; H^0(B\Z_n\times B\Z_n;\Z))=H^2(BC_n;\Z)\cong\Z_n.\]
For $E_2^{0,2}$, we have
\[E_2^{0,2}=H^0(BC_n; H^2(B\Z_n\times B\Z_n;\Z)_{\phi})=H^2(B\Z_n\times B\Z_n;\Z)^{\phi}\cong (\Z_n\times\Z_n)^{\phi},\]
i.e., the invariants of $\Z_n\times\Z_n$ under the action $\phi$. By Lemma \ref{lem:phi}, we have
\[E_2^{0,2}\cong (\Z_n\times\Z_n)^{\phi}\cong\Z_n.\]
By \eqref{eq:H2SES}, we conclude.
\end{proof}

Let $\alpha$ and $\beta$ be the conjugation class in $PU_n$ of $\alpha'$ and $\beta'$, respectively. By \eqref{eq:alpha'beta'} we have $\alpha\beta=\beta\alpha$. Let $V_n$ be the subgroup of $PU_n$ generated by $\alpha$ and $\beta$, and we have $V_n\cong\Z_n\times\Z_n$.
\begin{remark}
In the case $n=p$, the subgroup $V_p\subset PU_p$ plays an important role in Vistoli \cite{vistoli2007cohomology}, where the Chow ring and integral cohomology of $BPU_p$ are thoroughly studied.
\end{remark}
\begin{lemma}\label{lem:H3}
$H^2(BV_n;Z)\cong\Z_n\oplus\Z_n$, $H^3(BV_n;\Z)\cong\Z_n.$
\end{lemma}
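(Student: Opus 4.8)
The plan is to exploit the fact, established just before the statement, that $V_n\cong\Z_n\times\Z_n$. Since $V_n$ is a direct product, its classifying space splits as a product of Eilenberg--Mac Lane spaces,
\[
BV_n\simeq B\Z_n\times B\Z_n,
\]
so the lemma reduces to a Künneth computation built on the integral cohomology of $B\Z_n=K(\Z_n,1)$. First I would recall the standard fact that
\[
H^i(B\Z_n;\Z)\cong
\begin{cases}
\Z,& i=0,\\
\Z_n,& i=2k,\ k\geq1,\\
0,& i\ \text{odd};
\end{cases}
\]
in particular the relevant low-degree groups, in degrees $0,1,2,3,4$, are $\Z,0,\Z_n,0,\Z_n$.

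Next, because all these cohomology groups are finitely generated, I would invoke the (split) Künneth short exact sequence for cohomology,
\[
0\to\bigoplus_{i+j=k}H^i(B\Z_n)\otimes H^j(B\Z_n)\to H^k(BV_n;\Z)\to\bigoplus_{i+j=k+1}\opn{Tor}\bigl(H^i(B\Z_n),H^j(B\Z_n)\bigr)\to0,
\]
and evaluate it in degrees $k=2$ and $k=3$. For $k=2$, the tensor summands $H^0\otimes H^2$ and $H^2\otimes H^0$ each contribute a copy of $\Z_n$ (the cross term $H^1\otimes H^1$ vanishes), while every $\opn{Tor}$ term in total degree $3$ vanishes since the odd-degree cohomology of $B\Z_n$ is trivial; hence $H^2(BV_n;\Z)\cong\Z_n\oplus\Z_n$. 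For $k=3$, each tensor term in total degree $3$ pairs an even with an odd degree and so dies, and the only surviving contribution is the $\opn{Tor}$ term $\opn{Tor}(H^2,H^2)=\opn{Tor}(\Z_n,\Z_n)\cong\Z_n$ in total degree $4$, giving $H^3(BV_n;\Z)\cong\Z_n$.

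The substance of the argument — the only place requiring genuine care rather than routine bookkeeping — is the origin of $H^3$: there is no tensor contribution whatsoever in odd total degree, and the entire group arises from the torsion correction $\opn{Tor}(\Z_n,\Z_n)\cong\Z_n$. Thus the main point is simply to track correctly which index pairs $(i,j)$ feed the tensor summand versus the $\opn{Tor}$ summand; once the vanishing of the odd-degree factors is taken into account, both isomorphisms fall out immediately. (One could alternatively run the Lyndon--Hochschild--Serre spectral sequence as in the proof of Lemma~\ref{lem:V'}, but the Künneth route is cleaner here since $V_n$ is already a genuine direct product.)
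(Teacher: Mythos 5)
Your proof is correct and follows exactly the paper's approach: the paper's own proof of this lemma is the one-line observation that it "follows from the isomorphism $V_n\cong\Z_n\times\Z_n$ and the K\"unneth formula," and your write-up simply supplies the details of that K\"unneth computation. Your bookkeeping is right, including the key point that $H^3(BV_n;\Z)$ arises entirely from the $\opn{Tor}(\Z_n,\Z_n)$ correction term.
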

\begin{proof}
This follows from the isomorphism $V_n\cong\Z_n\times\Z_n$ and the K{\"u}nneth formula.
\end{proof}
Recall that by \eqref{eq:H123} we have $H^3(BPU_n;\Z)\cong\Z_n$.
\begin{lemma}\label{lem:H3 restriction}
The inclusion $V_n\subset PU_n$ induces an isomorphism
\[H^3(BPU_n;\Z)\xrightarrow{\cong} H^3(BV_n;\Z).\]
\end{lemma}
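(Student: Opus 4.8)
The plan is as follows. By \eqref{eq:H123} and Lemma \ref{lem:H3}, both $H^3(BPU_n;\Z)$ and $H^3(BV_n;\Z)$ are cyclic of order $n$, and $x_1$ generates the former; so it suffices to show that the restriction $x_1|_{BV_n}$ generates $H^3(BV_n;\Z)$, i.e.\ has order exactly $n$. I would realize $x_1$ geometrically as the classifying class of the central extension
\[1\to S^1\to U_n\xrightarrow{q} PU_n\to 1,\]
using the identification $H^3(BPU_n;\Z)\cong H^2(BPU_n;\underline{S^1})$ coming from the coefficient sequence $0\to\Z\to\R\to S^1\to 0$ (whose $\R$-terms vanish here in positive degrees), under which $x_1$ corresponds to the class of this extension. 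Restricting along $BV_n\to BPU_n$ then identifies $x_1|_{BV_n}$ with the class of the pulled-back central extension
\[1\to S^1\to q^{-1}(V_n)\to V_n\to 1,\]
and since $V_n$ is generated by the images of $\alpha'$ and $\beta'$, one has $q^{-1}(V_n)=V_n'\cdot S^1$.

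Next I would compute the commutator pairing of this extension. Because $S^1$ is central, the commutator $c(g,h)=\tilde g\tilde h\tilde g^{-1}\tilde h^{-1}$ of lifts is a well-defined alternating bilinear pairing $c\colon\wedge^2 V_n\to S^1$ depending only on the extension class. Lifting the generators $\alpha,\beta$ of $V_n$ to $\alpha',\beta'$ and invoking \eqref{eq:alpha'beta'} gives $c(\alpha,\beta)=\zeta^{-1}$, a primitive $n$th root of unity. Since $\wedge^2 V_n\cong\Z_n$, the pairing $c$ is therefore a generator of $\opn{Hom}(\wedge^2 V_n,S^1)\cong\Z_n$.

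Finally I would invoke the classification of $S^1$-central extensions of the finite abelian group $V_n\cong\Z_n\times\Z_n$. The commutator pairing defines a homomorphism
\[H^2(BV_n;\underline{S^1})\cong H^3(BV_n;\Z)\longrightarrow\opn{Hom}(\wedge^2 V_n,S^1),\]
whose kernel consists of the abelian extensions; as $S^1$ is divisible, $\opn{Ext}(V_n,S^1)=0$, so every abelian extension splits and the kernel is trivial. Both source and target are isomorphic to $\Z_n$, so this map is an isomorphism. Consequently $x_1|_{BV_n}$, whose commutator pairing is a generator, is itself a generator of $H^3(BV_n;\Z)$, and the restriction map sends a generator to a generator, hence is the desired isomorphism.

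The main obstacle I expect is making the third paragraph fully rigorous: one must check that the commutator-pairing map is well defined on cohomology classes (not merely on isomorphism classes of extensions), identify its kernel with the split abelian extensions via $\opn{Ext}(V_n,S^1)=0$, and verify the order count $\wedge^2(\Z_n\times\Z_n)\cong\Z_n$, so that injectivity upgrades to an isomorphism. An alternative, more computational route would avoid this by writing $x_1|_{BV_n}$ as the integral Bockstein of the Heisenberg cocycle $t_1\smile t_2\in H^2(BV_n;\Z_n)$, where $t_1,t_2$ are the degree-one generators dual to $\alpha,\beta$, and then computing its order directly in $H^*(B(\Z_n\times\Z_n);\Z)$ via the K\"unneth formula; either way the essential input is the primitivity of the root $\zeta$ in \eqref{eq:alpha'beta'}.
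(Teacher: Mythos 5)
Your argument is correct, but it is genuinely different from the one in the paper. The paper never touches extension classes directly: it compares the Serre spectral sequences of the two fibrations $BV'_n\to BV_n\to K(\Z_n,2)$ and $BU_n\to BPU_n\xrightarrow{\chi}K(\Z,3)$ from diagram \eqref{eq:compare}, and uses the order count of Lemma \ref{lem:V'} ($|H^2(BV'_n;\Z)|=n^2=|H^2(BV_n;\Z)|$) to force the differential $d_3\colon{}^VE_3^{0,2}\to{}^VE_3^{3,0}$ to vanish, so that the edge homomorphism $H^3(K(\Z_n,2);\Z)\to H^3(BV_n;\Z)$ survives and the comparison map on $E_\infty^{3,0}$ gives the isomorphism. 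You instead identify $x_1$ with the class of the central extension $1\to S^1\to U_n\to PU_n\to 1$ under $H^3(-;\Z)\cong H^2(-;\underline{S^1})$, restrict to the extension $1\to S^1\to S^1\cdot V'_n\to V_n\to 1$, and detect its order by the commutator pairing, which is a generator of $\opn{Hom}(\wedge^2 V_n,S^1)\cong\Z_n$ because $[\beta',\alpha']=\zeta$ is a primitive $n$th root of unity by \eqref{eq:alpha'beta'}; injectivity of the pairing map follows from $\opn{Ext}(V_n,S^1)=0$ via the universal-coefficient sequence $0\to\opn{Ext}(V_n,S^1)\to H^2_{\mathrm{grp}}(V_n;S^1)\to\opn{Hom}(\wedge^2V_n,S^1)\to 0$. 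Your route is more elementary and more conceptual --- it explains \emph{why} $V_n$ detects the Brauer class, namely through the noncommutativity of the lifts $\alpha',\beta'$, and it is closer in spirit to Vistoli's original use of this subgroup --- at the cost of having to justify the standard but not-entirely-free compatibility between the topological class $\chi^*(\tau)$ restricted to the finite group $V_n$ and the group-extension class in $H^2_{\mathrm{grp}}(V_n;S^1)$ (the point you correctly flag). The paper's route stays entirely within the fibration machinery it has already set up and reuses Lemma \ref{lem:V'}, so nothing new needs to be identified, but it is less illuminating about where the order-$n$ phenomenon comes from. Both proofs ultimately rest on the same input, the relation $\beta'\alpha'=\zeta\alpha'\beta'$ with $\zeta$ primitive.
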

\begin{proof}
By construction we have a short exact sequence of groups
\[1\rightarrow\Z_n\rightarrow V'_n\rightarrow V_n\rightarrow 1,\]
which induces a homotopy fiber sequence
\[B\Z_n\rightarrow BV'_n\rightarrow BV_n.\]
Delooping the first term, we obtain another homotopy fiber sequence
\begin{equation*}
BV'_n\rightarrow BV_n\rightarrow K(\Z_n,2).
\end{equation*}
On the other hand, recall the homotopy fiber sequence \eqref{eq:BPUn fib seq}:
\begin{equation*}
BU_n\rightarrow BPU_n\xrightarrow{\chi}K(\Z,3).
\end{equation*}
We compare the two homotopy sequences above by the following commutative (up to homotopy) diagram:
\begin{equation}\label{eq:compare}
\begin{tikzcd}
BV'_n\arrow[r]\arrow[d]&BV_n\arrow[r]\arrow[d]&K(\Z_n,2)\arrow[d,"\delta"]\\
BU_n\arrow[r]&BPU_n\arrow[r,"\chi"]&K(\Z,3).
\end{tikzcd}
\end{equation}
where the first two vertical arrows are induced by the inclusions of groups, and the third one $\delta$ is the Bockstein homomorphism.

Let $^VE_*^{*,*}$ and $^UE_*^{*,*}$ be the Serre spectral sequences for the upper and lower homotopy fiber sequences in \eqref{eq:compare}, respectively:
\begin{equation*}
\begin{split}
&^VE_2^{s,t}=H^s(K(\Z_n,2);H^t(BV'_n);\Z)\Rightarrow H^{s+t}(BV_n;\Z),\\
&^UE_2^{s,t}=H^s(K(\Z,3);H^t(BU_n);\Z)\Rightarrow H^{s+t}(BPU_n;\Z).
\end{split}
\end{equation*}
The only nontrivial group $^VE_2^{s,t}$ with $s+t=2$ is
\[^VE_2^{0,2}\cong H^2(BV'_n;\Z).\]
Therefore, by Lemma \ref{lem:H3}, we have
\begin{equation}\label{eq:VEinfty}
^VE_{\infty}^{0,2}\cong H^2(BV_n;\Z)\cong\Z_n\oplus\Z_n.
\end{equation}
By Lemma \ref{lem:V'}, we have a short exact sequence
\begin{equation}\label{eq:VE2}
0\to\Z_n\to {^VE}_2^{0,2}\to\Z_n\to 0.
\end{equation}
Comparing \eqref{eq:VEinfty} and \eqref{eq:VE2}, we have
\[^VE_{\infty}^{0,2}\cong{^VE}_2^{0,2}\cong\Z_n\oplus\Z_n\cong H^2(BV_n;\Z).\]
Hence, there is no nontrivial differential landing in ${^VE}_2^{3,0}\cong\Z_n$. By Lemma \ref{lem:H3}, we have
\[{^VE}_{\infty}^{3,0}={^VE}_2^{3,0}\cong H^3(BV_n;\Z)\cong\Z_n.\]
Then the diagram \eqref{eq:compare} induces an isomorphism
\[H^3(BPU_n;Z)={^UE}_{\infty}^{3,0}\rightarrow {^VE}_{\infty}^{3,0}={^VE}_2^{3,0}=H^3(BV_n;\Z),\]
and we conclude.
\end{proof}

Now we are ready to prove Theorem \ref{thm:main}:
\begin{theorem}[Theorem \ref{thm:main}]
Let $p$ be an odd prime. In dimensions $0\leq k \leq 2(p+1)$, the graded ring $\BP^*(BPU_n)\otimes_{BP^*}\Z_{(p)}$, or equivalently, the image of the Thom map $\BP^*(BPU_n)\to H^*(BPU_n;\Z_{(p)})$ concentrates in even dimensions.
\end{theorem}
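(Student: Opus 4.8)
The plan is to show that the image of $T$ vanishes in every odd degree $k$ with $0\le k\le 2p+2$; since these are the only odd degrees in the range, this is precisely the assertion of Theorem \ref{thm:main}. First I would determine where the target $H^k(BPU_n;\Z_{(p)})$ can even be nonzero in odd degrees of the range. Because $PU_n$ is a compact connected Lie group, $H^*(BPU_n;\Q)$ is a polynomial algebra on generators of even degree, so the free part of $H^{\mathrm{odd}}(BPU_n;\Z_{(p)})$ vanishes. By Theorem \ref{thm:2p+2} the $p$-torsion subgroup of $H^k(BPU_n;\Z)$ is $0$ for $3<k<2p+2$, and since $\Z_{(p)}$ is flat this kills all torsion in those odd degrees after localization. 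Together with $H^1(BPU_n;\Z_{(p)})=0$ from \eqref{eq:H123}, this leaves $H^3(BPU_n;\Z_{(p)})\cong\Z_{p^r}$ as the unique nonzero odd-degree group in the range. Hence the image of $T$ automatically concentrates in even degrees once I show that its degree-$3$ component is zero.

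For the degree-$3$ step I would use the inclusion $\iota\colon V_n\hookrightarrow PU_n$ together with the naturality of the Thom map, which furnishes a commutative square relating $T\colon\BP^3(BPU_n)\to H^3(BPU_n;\Z_{(p)})$ to $T\colon\BP^3(BV_n)\to H^3(BV_n;\Z_{(p)})$ through the restriction maps $\iota^*$. The two inputs are: (i) $\iota^*\colon H^3(BPU_n;\Z_{(p)})\to H^3(BV_n;\Z_{(p)})$ is an isomorphism, which is the $p$-localization of Lemma \ref{lem:H3 restriction} (both groups becoming $\Z_{p^r}$ after tensoring with $\Z_{(p)}$); and (ii) $\BP^3(BV_n)=0$. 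Granting these, for any $z\in\BP^3(BPU_n)$ I get $\iota^*T(z)=T(\iota^*z)=T(0)=0$, and the injectivity of $\iota^*$ on $H^3$ forces $T(z)=0$. This disposes of degree $3$ and completes the proof modulo (ii).

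The heart of the matter, and the step I expect to be the main obstacle, is establishing (ii). Since $V_n\cong\Z_n\times\Z_n$ with $n=p^rm$, and $\BP$ is already $p$-local, the factor $\Z_m$ contributes nothing to $\widetilde{\BP}^*$, so $\BP^*(BV_n)$ reduces to the Brown--Peterson cohomology of the classifying space of the finite abelian $p$-group $\Z_{p^r}\times\Z_{p^r}$, which is known to concentrate in even degrees (cf.\ Landweber \cite{landweber1970coherence} and Kono--Yagita \cite{kono1993brown}); in particular $\BP^3(BV_n)=0$. If instead one wants a self-contained argument, I would run the Atiyah--Hirzebruch spectral sequence for $\BP^*(BV_n)$: the subtle point is that the odd class generating $H^3(BV_n;\Z_{(p)})\cong\Z_{p^r}$ arises solely from the $\mathrm{Tor}$ term in the K\"unneth computation of Lemma \ref{lem:H3}, and it must be killed before reaching $\BP^3$. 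It is indeed annihilated by the first nontrivial differential $d_{2p-1}$, which on this line is detected by the Milnor operation $Q_1$ (landing in the $v_1$-multiple of $H^{2p+2}(BV_n;\Z_{(p)})$). Verifying that $d_{2p-1}$ is \emph{injective} on the degree-$3$ line, rather than merely nonzero, is the one computational point that requires care.
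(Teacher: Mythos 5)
Your proposal is correct and follows essentially the same route as the paper: reduce to showing the degree-$3$ image of the Thom map vanishes (via Theorem \ref{thm:2p+2} and evenness of the non-torsion part), then use naturality of $T$ for the inclusion $V_n\subset PU_n$, the isomorphism of Lemma \ref{lem:H3 restriction}, and Landweber's evenness result $\BP^3(BV_n)=0$ for the abelian group $V_n$. The only difference is cosmetic: the paper phrases the last step as a contradiction, while you argue directly, and your optional Atiyah--Hirzebruch sketch is an unnecessary (and unverified) supplement to the Landweber citation that the paper also relies on.
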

\begin{proof}
It follows from Theorem \ref{thm:2p+2} that the $p$-torsion subgroup of $H^k(BPU_n;\mathbb{Z})$ is $0$ for $0<k<2p+2$, $k\neq3$. On the other hand, all the non-torsion classes in $H^*(BPU_n;\Z_{(p)})$ are of even dimensions. Therefore, it suffices to show that the image of the Thom map
\[T:\BP^3(BPU_n)\rightarrow H^3(BPU_n;\Z_{p})\]
is trivial.

Consider the commutative diagram
\begin{equation}\label{eq:Thom3}
\begin{tikzcd}
\BP^3(BPU_n)\arrow[r]\arrow[d,"T"]&\BP^3(BV_n)\arrow[d,"T"]\\
H^3(BPU_n;\Z_{(p)})\arrow[r,"\cong"]&H^3(BV_n;\Z_{(p)})
\end{tikzcd}
\end{equation}
where the vertical arrows are the Thom maps, and the horizontal ones are the restrictions. By Lemma \ref{lem:H3 restriction}, the bottom arrow is an isomorphism.

We complete the proof by contradiction. If the vertical arrow to the left has a nontrivial image, then so is the image of the composition
\begin{equation}\label{eq:composition}
\BP^3(BPU_n)\xrightarrow{T}H^3(BPU_n;\Z_{(p)})\xrightarrow{\cong} H^3(BV_n;\Z_{(p)}).
\end{equation}
However, by Landweber \cite{landweber1970coherence}, $\BP^*(BG)$ concentrates in even dimensions if $G$ is abelian, from which we deduce $\BP^3(BV_n)=0$. Therefore, by \eqref{eq:Thom3}, the composition \eqref{eq:composition} factors through $0$, leading to a contradiction.
\end{proof}

\section{The classes $\eta_{p,k}$}\label{sec:eta}
In this section we prove Theorem \ref{thm:eta p,k} by studying the Thom map for the Eilenberg-Mac Lane space $K(\Z,3)$. Indeed, the Thom maps for Eilenberg-Mac Lane spaces are studied in Tamanoi \cite{tamanoi1997image}, and we make use of one of his main conclusions. In what follows, we denote by $\mathscr{A}^*$ the mod $p$ Steenrod algebra for an odd prime $p$, and we use the notations for the stable cohomology operations in Milnor \cite{milnor1958steenrod}.
\begin{theorem}[Tamanoi, (I) of Theorem A, \cite{tamanoi1997image}]\label{thm:Tamanoi}
Let $p$ be a prime, and let $n\geq 1$. The image of the Thom map
\[T':\BP^*(K(\Z,n+2))\rightarrow H^*(K(\Z,n+2);\Z_{p})\]
is an $\mathscr{A}^*$-invariant polynomial subalgebra with infinitely many generators:
\[\opn{Im}T'=\Z_p[Q_{s_n}Q_{s_{n-1}}\cdots Q_{s_1}(\tau_{n+2})|0<s_1<\cdots<s_n],\]
where $\tau_{n+2}\in H^{n+2}(K(\Z,n+2);\Z_{p})$ is the fundamental class.
\end{theorem}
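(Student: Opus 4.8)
The plan is to identify $\opn{Im}T'$ with an explicitly describable subalgebra of $R:=H^*(K(\Z,n+2);\Z_p)$ by proving two containments together with one algebraic identification. Recall that by Cartan--Serre $R$ is the free graded-commutative $\Z_p$-algebra on the admissible monomials $\mathscr{P}^I(\tau_{n+2})$, and let $E=\Lambda(Q_0,Q_1,Q_2,\dots)$ be the exterior algebra of Milnor primitives acting on $R$ by odd derivations with $Q_iQ_j=-Q_jQ_i$ and $Q_i^2=0$. Write $B=\Z_p[\,Q_{s_n}\cdots Q_{s_1}(\tau_{n+2})\mid 0<s_1<\cdots<s_n\,]$ for the claimed image. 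Since $T'$ is a ring homomorphism, $\opn{Im}T'$ is a subalgebra, so it suffices to prove (i) $\opn{Im}T'\subseteq B$ and (ii) each generator of $B$ lies in $\opn{Im}T'$.

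For the forward containment I would exploit that the Thom map factors through the connective Morava $K$-theories. For $i\ge 1$ the quotient $k(i)=\BP/(p,v_1,\dots,\widehat{v_i},\dots)$ has $k(i)^*\cong\Z_p[v_i]$ and $k(i)/(v_i)\simeq H\Z_p$, so $T'$ factors as $\BP^*(-)\to k(i)^*(-)\xrightarrow{\rho_i}H^*(-;\Z_p)$. The $v_i$-Bockstein long exact sequence identifies the composite of the connecting map $\partial_i$ with $\rho_i$ as the Milnor operation $Q_i$, whence $\opn{Im}\rho_i=\ker\partial_i\subseteq\ker Q_i$. Handling $Q_0=\beta$ through the factorization $\BP\to H\Z_{(p)}\to H\Z_p$ (so that the image of reduction is $\ker\beta$), this gives the clean inclusion $\opn{Im}T'\subseteq\bigcap_{i\ge0}\ker\bigl(Q_i\colon R\to R\bigr)$, with no analysis of higher differentials needed.

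The heart of the argument is the purely algebraic identification $\bigcap_{i\ge0}\ker Q_i=B$, i.e. the computation of the $E$-socle $\opn{Hom}_E(\Z_p,R)$. Here I would invoke the precise $E$-module structure of $R$ coming from the Milnor-basis description of $H^*(K(\pi,m);\Z_p)$ in Tamanoi \cite{tamanoi1999subalgebras}. The key structural input is that $\tau=\tau_{n+2}$ generates under $\Lambda(Q_1,Q_2,\dots)$ a \emph{truncated} submodule: the products $Q_{s_k}\cdots Q_{s_1}(\tau)$ are nonzero precisely for $k\le n$ and vanish for $k\ge n+1$, while $Q_0\tau=\beta\tau=0$. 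The top-length classes $Q_{s_n}\cdots Q_{s_1}(\tau)$ are then annihilated by every $Q_i$ (repeated indices die by $Q_i^2=0$ and anticommutativity, new positive indices produce vanishing length-$(n+1)$ products, and $Q_0$ acts through $\beta\tau=0$), and so are their products, giving $B\subseteq\bigcap\ker Q_i$. For the reverse inclusion I would filter $R$ by $Q$-length and pass to the associated graded, where the $Q_i$-action becomes a direct sum of elementary Koszul differentials; there the intersection of the kernels is visibly spanned by the top classes and their products, and a leading-term argument lifts this identification back to $R$. This yields $\bigcap\ker Q_i=B$, hence $\opn{Im}T'\subseteq B$, and simultaneously exhibits $B$ as a polynomial algebra on the displayed generators and verifies its $\mathscr{A}^*$-invariance.

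It remains to realize the generators, and this is where I expect the real difficulty to lie. Fix $g=Q_{s_n}\cdots Q_{s_1}(\tau)$; since $g\in\ker Q_0=\ker\beta$ it lifts to an integral class, and I would show it is a permanent cycle in the Atiyah--Hirzebruch spectral sequence for $\BP^*(K(\Z,n+2))$. No differential enters the bottom row, because $\BP^t=0$ for $t>0$, so $\opn{Im}T'$ is exactly the mod $p$ reduction of the permanent cycles on that row. The first differential $d_{2p-1}$ is $v_1\otimes Q_1$, which kills $g$ as $Q_1g=0$; inductively the differential detecting $v_i$ restricts, on the classes that have survived, to $v_i\otimes Q_i$ modulo correction terms assembled from the lower operations, and these corrections vanish on $g$ precisely because every $Q_jg=0$. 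Thus $g$ survives to $E_\infty$ and lifts to $\BP$, and multiplicativity of $T'$ promotes this to $B\subseteq\opn{Im}T'$. \textbf{The main obstacle} is exactly this last verification: confirming that the higher differentials and all of their correction terms genuinely vanish on the $Q$-annihilated generators, so that the desired $\BP$-classes actually exist. The socle computation of the preceding paragraph is the algebraic crux on which it rests, but the construction of the lifts is where the care is required. Combining (i) and (ii) gives $\opn{Im}T'=B$, as asserted.
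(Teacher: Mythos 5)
First, a point of reference: the paper does not prove this statement at all --- it is imported verbatim from Tamanoi (Theorem A(I) of \cite{tamanoi1997image}) and used as a black box, so your proposal can only be measured against Tamanoi's original argument. Your skeleton (upper bound by $Q$-annihilation, identification of the common kernel, realization of the generators) is a reasonable reconstruction of the shape of that argument, and the containment $\opn{Im}T'\subseteq\bigcap_{i\geq 0}\ker Q_i$ is indeed standard --- even more directly than via $k(i)$: since $H^*(\BP;\Z_p)\cong\mathscr{A}^*/\mathscr{A}^*(Q_0,Q_1,\dots)$, the mod $p$ Thom class is annihilated by every $Q_i$ and naturality finishes. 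But both of your substantive steps have genuine gaps.

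The socle identification $\bigcap_{i\geq 0}\ker Q_i=B$ does not follow from the structural input you cite. The truncation statement concerns only the $E$-orbit of $\tau$, whereas $R$ has polynomial generators outside that orbit on which the $Q_i$ act by nonzero \emph{decomposables}. Already for $K(\Z,3)$: with $a_k=\mathscr{P}^{p^{k-1}}\cdots\mathscr{P}^1\tau$ and $b_k=Q_k\tau$ one has $Q_0a_k=\pm b_k$, while an Adem-relation computation (using $\mathscr{P}^1\mathscr{P}^p\equiv-\mathscr{P}^{p+1}$ and instability) gives $Q_1a_2=\mp\mathscr{P}^1b_2$, a nonzero multiple of $b_1^{p}$. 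Your associated-graded Koszul picture discards exactly these decomposable corrections, and at the graded level the common kernel is strictly larger than $B$: the Koszul cycle $b_1a_2-a_1b_2$ is killed by $Q_0$ and by every graded $Q_i$ (note $Q_1a_1=0$ for degree reasons), yet it fails to lie in the true socle only because $Q_1(b_1a_2-a_1b_2)$ is a nonzero multiple of $b_1^{p+1}$. So the leading-term argument has the logic backwards: the corrections you drop are precisely what make the intersection small, and no lift of the graded computation can recover this. This is the real content of Tamanoi's $Q$-subalgebra analysis (cf.\ \cite{tamanoi1999subalgebras}), not a formality. The realization step is in worse shape: only $d_{2p-1}=v_1\otimes Q_1$ is a theorem about the Atiyah--Hirzebruch spectral sequence; no description of the higher differentials as ``$v_i\otimes Q_i$ modulo corrections vanishing on $\bigcap_j\ker Q_j$'' is available, and if such a statement held for all spaces it would force $\opn{Im}T'=\bigcap_i\ker Q_i$ universally, which is false in general --- the sharpness of the $Q$-socle bound for Eilenberg--Mac Lane spaces is exactly what makes Tamanoi's theorem a theorem. (There are also unaddressed convergence and $\lim^1$ issues for the spectral sequence of an infinite complex such as $K(\Z,n+2)$.) Tamanoi's construction of the $\BP$-lifts rests instead on the Ravenel--Wilson computation of the Morava $K$-theories of Eilenberg--Mac Lane spaces together with $v_i$-Bockstein and inverse-limit arguments. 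You flagged this step as the main obstacle --- correctly --- but flagging it does not close it: as written, both the algebraic crux and the lifting step remain open.
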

\begin{corollary}\label{cor:Tamonoi}
The classes $y_{p,k}\in H^*(K(\Z,3);\Z_{(p)})$ for $k\geq 0$ are in the image of the Thom map
\[T:\BP^*(K(\Z,3))\rightarrow H^*(K(\Z,3);\Z_{(p)}).\]
\end{corollary}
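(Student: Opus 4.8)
The plan is to descend Tamanoi's mod $p$ computation (Theorem~\ref{thm:Tamanoi}) to $\Z_{(p)}$-coefficients, using the injectivity of mod $p$ reduction recorded in Corollary~\ref{cor:K(Z,3)p-tor}. The relevant coefficient maps fit together as follows: the mod $p$ Thom map $T'$ is the composite of the $\Z_{(p)}$-coefficient Thom map $T$ with the reduction homomorphism $\rho\colon H^*(K(\Z,3);\Z_{(p)})\to H^*(K(\Z,3);\Z_p)$, since reducing the Thom map $\BP\to H\Z_{(p)}$ modulo $p$ yields the mod $p$ Thom map $\BP\to H\Z_p$. Thus $T'=\rho\circ T$, and it suffices to realize each $y_{p,k}$ after applying $\rho$ and then invoke injectivity to lift.

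First I would specialize Theorem~\ref{thm:Tamanoi} to $n=1$, so that $n+2=3$ and the fundamental class $\tau_3\in H^3(K(\Z,3);\Z_p)$ is the mod $p$ reduction of the canonical Brauer class $x_1$. For $n=1$ there is a single index constraint $0<s_1$, so the polynomial generators of $\opn{Im}T'$ are exactly the classes $Q_{s_1}(\tau_3)$ with $s_1\geq 1$. On the other hand, the discussion preceding Theorem~\ref{thm:2p+2} records $\bar{y}_{p,k}=Q_{k+1}(x_1)=Q_{k+1}(\tau_3)$ for $k\geq 0$, and the degrees match: $Q_{k+1}$ raises degree by $2p^{k+1}-1$, so $Q_{k+1}(\tau_3)$ lies in degree $2p^{k+1}+2$, as required. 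Hence each $\bar{y}_{p,k}$ is literally one of Tamanoi's generators, and in particular $\bar{y}_{p,k}\in\opn{Im}T'$. I would then choose $\eta_{p,k}\in\BP^{2p^{k+1}+2}(K(\Z,3))$ with $T'(\eta_{p,k})=\bar{y}_{p,k}$.

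It remains to upgrade this to the integral statement. From $T'=\rho\circ T$ we get $\rho(T(\eta_{p,k}))=\bar{y}_{p,k}=\rho(y_{p,k})$, where $T(\eta_{p,k})$ and $y_{p,k}$ both lie in $H^{2p^{k+1}+2}(K(\Z,3);\Z_{(p)})$. Since $2p^{k+1}+2>3$, Corollary~\ref{cor:K(Z,3)p-tor} tells us $\rho$ is injective in this degree, whence $T(\eta_{p,k})=y_{p,k}$, exhibiting $y_{p,k}$ in the image of $T$. The only step requiring genuine input beyond bookkeeping is precisely this lift: mod $p$ reduction could a priori fail to be injective, and it is the fact (Lemma~\ref{lem:K(Z,3)p-tor}) that all torsion in $H^*(K(\Z,3);\Z_{(p)})$ is $p$-torsion that rescues the argument. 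I would therefore regard the passage from ``agree mod $p$'' to ``agree over $\Z_{(p)}$'' as the main point to get right, together with the bookkeeping confirming that $\eta_{p,k}$ is chosen in the correct topological degree so that $T(\eta_{p,k})$ lands in the intended cohomology group.
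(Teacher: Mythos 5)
Your proposal is correct and follows essentially the same route as the paper: factor the mod $p$ Thom map as $T'=\rho\circ T$, identify $\bar{y}_{p,k}=Q_{k+1}(\tau_3)$ among Tamanoi's generators, and use the injectivity of $\rho$ in degrees $>3$ (Corollary \ref{cor:K(Z,3)p-tor}) to lift to $\Z_{(p)}$-coefficients. The only difference is that you spell out the degree bookkeeping and the specialization to $n=1$ explicitly, which the paper leaves implicit.
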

\begin{proof}
By definition, the Thom map $T'$ factors as
\begin{equation*}
\begin{tikzcd}
\BP^k(K(\Z,3))\arrow[dr,"T"]\arrow[rr,"T'"]& &H^k(K(\Z,3);\Z_p)\\
&H^k(K(\Z,3);\Z_{(p)})\arrow[ur,hook]
\end{tikzcd}
\end{equation*}
where the hooked arrow is monic for $k>3$, by Corollary \ref{cor:K(Z,3)p-tor}. The desired result then follows from Theorem \ref{thm:Tamanoi}.
\end{proof}
We proceed to prove Theorem \ref{thm:eta p,k}:
\begin{theorem}[Theorem \ref{thm:eta p,k}]
Let $p$ be an odd prime. For $k\geq 0$ and $p|n$, there are classes
\begin{equation*}
\eta_{p,k}\in\BP^{2p^{k+1}+2}(BPU_n)
\end{equation*}
satisfying
\[T(\eta_{p,k})=y_{p,k}\in H^{2p^{k+1}+2}(BPU_n;\Z_{(p)})\]
where $T$ is the Thom map.
\end{theorem}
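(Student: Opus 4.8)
The plan is to reduce the assertion on $BPU_n$ to the corresponding fact on $K(\Z,3)$ by pulling back along the classifying map $\chi\colon BPU_n\to K(\Z,3)$ of \eqref{eq:chi}. This is the natural route because, by the convention fixed in Section \ref{sec:ordinary coh}, the class $y_{p,k}\in H^{2p^{k+1}+2}(BPU_n;\Z_{(p)})$ is by definition the pullback $\chi^*(y_{p,k})$ of the class $y_{p,k}$ living on $K(\Z,3)$. Thus it suffices to produce a Brown-Peterson class on $K(\Z,3)$ lifting $y_{p,k}$ and then transport it along $\chi^*$.

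First I would invoke Corollary \ref{cor:Tamonoi}, which, via Tamanoi's Theorem \ref{thm:Tamanoi} together with the injectivity of mod $p$ reduction from Corollary \ref{cor:K(Z,3)p-tor}, produces a class $\tilde\eta_{p,k}\in\BP^{2p^{k+1}+2}(K(\Z,3))$ whose image under the Thom map $T\colon\BP^*(K(\Z,3))\to H^*(K(\Z,3);\Z_{(p)})$ is exactly $y_{p,k}$. I would then define the desired class by pullback:
\[\eta_{p,k}:=\chi^*(\tilde\eta_{p,k})\in\BP^{2p^{k+1}+2}(BPU_n).\]
The cohomological degree is correct since $\chi^*$ preserves degree.

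Next I would use that the Thom map $T\colon\BP\to H\Z_{(p)}$ is a morphism of spectra, hence a natural transformation of the associated cohomology theories. Applied to $\chi$, this yields the commuting square relating $\chi^*$ on $\BP$-cohomology to $\chi^*$ on ordinary $\Z_{(p)}$-cohomology, giving
\[T(\eta_{p,k})=T(\chi^*\tilde\eta_{p,k})=\chi^*(T\tilde\eta_{p,k})=\chi^*(y_{p,k}),\]
and the right-hand side is, by the notational convention recalled above, precisely the class $y_{p,k}\in H^{2p^{k+1}+2}(BPU_n;\Z_{(p)})$. This establishes $T(\eta_{p,k})=y_{p,k}$, as required.

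There is no serious obstacle in the pullback step itself: the argument is formal once a preimage on $K(\Z,3)$ is in hand, and the genuine content of the theorem is concentrated in Corollary \ref{cor:Tamonoi}. The hypothesis $p\mid n$ enters only to make the conclusion nonvacuous: by Theorem \ref{thm:2p^k+2} the pulled-back class $\chi^*(y_{p,k})$ is a nonzero $p$-torsion class in $H^{2p^{k+1}+2}(BPU_n;\Z_{(p)})$ exactly when $p$ divides $n$, so that $\eta_{p,k}$ is a genuinely nontrivial Brown-Peterson class and not merely a lift of zero.
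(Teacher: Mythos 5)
Your argument is correct and coincides with the paper's own proof: both obtain a $\BP$-lift of $y_{p,k}$ on $K(\Z,3)$ from Corollary \ref{cor:Tamonoi} and transport it to $BPU_n$ via $\chi^*$ using the naturality square for the Thom map. Your closing observation about the role of the hypothesis $p\mid n$ is a accurate gloss, though the paper leaves it implicit.
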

\begin{proof}
It follows from Corollary \ref{cor:Tamonoi} and the commutative diagram
\begin{equation*}
\begin{tikzcd}
\BP^*(K(\Z,3))\arrow[r,"\chi^*"]\arrow[d,"T"]&\BP^*(BPU_n)\arrow[d,"T"]\\
H^*(K(\Z,3);\Z_{(p)})\arrow[r,"\chi^*"]&H^*(BPU_n;\Z_{(p)})
\end{tikzcd}
\end{equation*}
that the classes $y_{p,k}\in H^*(BPU_n;\Z_{(p)})$ are in the image of
\[T: \BP^*(BPU_n)\rightarrow H^*(K(\Z,3);\Z_{(p)}).\]
Let $\eta_{p,k}\in\BP^*(BPU_n)$ satisfy $T(\eta_{p,k})=y_{p,k}$, and we conclude.
\end{proof}
\bibliographystyle{abbrv}
\bibliography{BrownPetersonofBPUnrefV2}
\end{document}